\documentclass[journal]{IEEEtran}
\IEEEoverridecommandlockouts
\usepackage{lineno}
\usepackage[cmex10]{amsmath}
\usepackage{amscd,amsfonts,amsmath,amssymb,mathrsfs,pifont,stmaryrd,tipa}
\usepackage{array,cases,dsfont,graphicx,texdraw}
\usepackage{graphics}
\usepackage{epstopdf}
\usepackage{epsfig}
\usepackage{cite}
\usepackage{mathbbold}
\usepackage{stfloats}
\usepackage{subcaption}
\usepackage{subfig}
\usepackage{mathtools}
\usepackage{wrapfig}
\usepackage{enumitem}
\usepackage{amsfonts}
\usepackage{amssymb}
\usepackage{graphicx}%
\usepackage[ruled, linesnumbered]{algorithm2e}
\usepackage{xcolor}

\usepackage{amsthm}       

\usepackage{caption}
\usepackage[labelfont={color=black}]{caption}

\newtheorem{theorem}{\bf{Theorem}}

\newtheorem{assumption}{Assumption}

\newtheorem{lemma}{\bf{Lemma}}

\newtheorem{remark}{\bf{Remark}}

\allowdisplaybreaks
\begin{document}
	
	\title{Clipping-Free Nash Equilibrium Seeking in Heavy-Tailed   Games via Median-of-Means}
	\author{Chao Sun, Huiming Zhang, Bo Chen, Jianzheng Wang, Zheming Wang, Li Yu
		\thanks{This work was supported in part by the Joint Funds of the National Natural Science Foundation of China under Grant U24A20258, in part by the Zhejiang Provincial Natural Science Foundation of China under Grant LRG25F030001, in part by the funding of Leading Innovative and Entrepreneur Team Introduction Program of Zhejiang under Grant 2023R01006, and in part by the Fundamental Research Funds for the Provincial Universities of Zhejiang under Grant RF-C2023007. (Corresponding author: Bo Chen.  Email: bchen@aliyun.com.) 
			
			Chao Sun, Bo Chen, Jianzheng Wang, Zheming Wang, and Li Yu are with the  Department of Automation, Zhejiang University of Technology, China, and Zhejiang Key Laboratory of Intelligent Perception and Control for Complex Systems. Huiming Zhang is with the Institute of Artificial Intelligence, Beihang University, China.}}
	\maketitle
	
	\begin{abstract}
	This paper studies Nash equilibrium seeking for stochastic games under heavy-tailed gradient noise. 
	The noise is assumed to have a finite $\delta$-th moment with $1<\delta\leq 2$, which allows infinite variance. 
	To obtain robust gradient estimates, we adopt the median-of-means (MoM) method in robust estimation. 
	At each iteration, samples are split into blocks, the gradients in each block are averaged, and the median of these block means is used to update the actions. 
	Compared with gradient clipping, MoM does not require a preset clipping threshold. 
	It is also robust to outlying and corrupted gradient samples. 
	Under standard assumptions, we prove convergence of the proposed algorithm and derive its convergence rate. 
	To reduce the bias caused by asymmetric noise, we further design an online bias-correction scheme. The simulation results show that the proposed methods performs much better than the clipping-based methods, especially for noise with a symmetric distribution.
	\end{abstract}
	
	\begin{IEEEkeywords}
	Heavy-tailed noise; Nash equilibrium seeking; Infinite variance data
	\end{IEEEkeywords}

	\thispagestyle{empty}
\section{Introduction}

Game theory provides a mathematical framework for studying interactions among rational decision-makers. In non-cooperative games, a Nash equilibrium \cite{Nash} is a stable state where no player can improve their payoff by changing only their own strategy, given the strategies of others. This concept has become a core tool for analyzing competitive problems in many fields, such as   mathematics \cite{mazalov2014mathematical}, economics \cite{gibbons1992game}, and energy systems \cite{mansouri2023three}. Nash equilibrium seeking in non-cooperative games has been extensively studied under various settings, including distributed optimization and continuous-time dynamics \cite{Ye, Xie, liang2017distributed,  frihauf2011nash,   pang2020distributed, de2019continuous, tatarenko2018learning, yi2019operator, he2025distributed, lu2018distributed,  feng2025adaptively1}. However, real decision-making environments often involve uncertainty. The payoff functions of players may be affected by random factors, such as fluctuations in market demand or changes in environmental dynamics. Stochastic games  \cite{ 4610024,xu2010sample,koshal2012regularized,iusem2017extragradient,huang2022new,franci2021stochastic,beznosikov2022decentralized,lei2022distributed,yu2017distributed,franci2020distributed, zheng2022distributed} can handle this uncertainty. Each player aims to minimize an expected cost or maximize an expected payoff, which includes a random variable.

In fact, the distribution of gradient estimation errors caused by randomness is crucial for the performance of stochastic game algorithms. However, to ensure convergence, existing algorithms usually consider only Gaussian-type data, mainly because it is convenient for mathematical analysis. Many recent studies have shown that the Gaussian distribution is too ideal to describe the data in real tasks, and heavy-tailed distributions, which are broader than the Gaussian distribution, are much closer to reality \cite{simsekli2019tail, zhang2020adaptive,gorbunov2020stochastic,kornilov2023gradient}. For example, the authors in \cite{simsekli2019tail} conducted extensive experiments on multiple architectures and datasets, and found that in all configurations, the estimated tail index is far below 2, indicating clear heavy-tailed behavior. The authors in \cite{zhang2020adaptive} from MIT and Google pointed out that when pre-training the large language model BERT on the Wikipedia dataset, the resulting stochastic gradients were heavy-tailed data with unbounded variance. 

While in the most related game studies, the authors in \cite{gorbunov2022clipped} found that when training practical game problems such as generative adversarial networks (GANs), the gradient noise indeed exhibits heavy-tailed characteristics, and introducing gradient clipping significantly improves algorithm performance, e.g., WGAN-GP's FID drops from 67.37 to 19.65, and StyleGAN2 goes from being completely untrainable to generating meaningful images.

Currently, there are only limited studies on the Nash equilibrium seeking problem for heavy-tailed games. The authors in \cite{gorbunov2022clipped, sadiev2023high} proposed gradient-clipping based approaches for heavy-tailed variational inequality problems, which can be applied to game issues. In \cite{sun2025distributed1}, we proposed a distributed gradient-clipping based method for non-cooperative games under the graph framework. However, these methods all rely on gradient clipping to tame the heavy-tailed noise. While gradient clipping is effective, it requires careful tuning of the clipping threshold and may introduce bias. 

To overcome these limitations, we apply the  MoM  technique in robust estimation, which provides robust gradient estimates without the need for clipping thresholds. This method has several practical benefits. First, it removes the requirement to carefully tune a clipping threshold, a hyperparameter that often needs domain knowledge and is sensitive to the unknown noise level. Second, unlike clipping which introduces a systematic bias, the median-of-means estimator can give an unbiased or nearly unbiased estimate under symmetric heavy-tailed noise, helping to preserve the true gradient direction. Third, it is naturally resilient to  corrupted or adversarial gradient updates, because it aggregates information by blocks and uses the median as a central tendency and ignores a bounded proportion of outliers. These properties make the algorithm more reliable in stochastic game settings.

The main contributions of this work are summarized as follows:

(1) A MoM-based Nash equilibrium seeking algorithm is proposed. It works under heavy-tailed gradient noise with only a finite $\delta$-th moment ($1<\delta\le 2$). Unlike gradient clipping, this method requires no preset clipping threshold, while naturally defending against malicious gradient attacks.
	
(2) We provide the convergence analysis for MoM applying to Nash equilibrium seeking problems. Furthermore, an almost sure convergence rate is derived and the influence of the heavy-tail exponent is shown.
	 
	(3) To address the systematic bias of the plain MoM estimator under asymmetric noise, an online bias correction strategy is introduced and a strict convergence proof is provided.  

\textbf{Notations}: Throughout this paper, 0 is the real number 0 or a zero vector with appropriate dimension. $\mathbb{R}$ and $\mathbb{R}^N$ represent the real number set and the $N$-dimensional real vector set, respectively. $\left\Vert e \right\Vert$ is the 2-norm of vector $e$. $\left\vert \cdot \right\vert$ is the absolute value. $\lambda_{\min}\{\cdot\}$ is the minimal eigenvalue of a matrix. $\mathbb{P}_\Omega[\cdot]$ is the Euclidean projection of a vector onto a set $\Omega$. $\mathbb{E}[\cdot]$ is the expectation of a random variable.  $\nabla_{x}f(y)$ is the gradient of a function $f(\cdot)$ with respect to $x$ at point $y$. $\operatorname{median}\{x_1,\cdots, x_N\}$ is the median of the $N$ variables where if $N$ is even, the average of the two middle values is taken. $\lfloor\cdot \rfloor$ represents the floor function and $\lceil \cdot \rceil$ represents the ceil function.


\section{Problem Formulation\label{S3}}

Consider a non-cooperative game comprised of $N>1$ players. The $i$-th player, $i=1,\cdots,N$, aims to solve the following stochastic optimization problem
\begin{align}
	\min_{x_i\in \Omega_i} J_i(x_i,x_{-i}):=  \mathbb{E}_{\xi_i}[f_i(x_i,x_{-i},\xi_i)], \label{system1}
\end{align}
where $x_i\in\Omega_i\subseteq \mathbb{R}$ is the action of player $i$, $x_{-i}\in\Omega_{-i}\subseteq \mathbb{R}^{N-1}$ is the action of players except $i$, $\xi_i\textcolor{black}{\in\mathbb{R}}$ is a local random variable, $\Omega_i$ is a local constraint set. 

In the following, we write $J_i(x_i,x_{-i})$ as $J_i(x)$ for brevity. The following assumptions are made.

\begin{assumption}
	$\Omega:=\Omega_1\times \Omega_2\times\cdots\times\Omega_N$ is a nonempty, convex and compact set. \label{boundedness}
\end{assumption}

\begin{assumption}
 $J_i(x)$ is continuously differentiable \textcolor{black}{and convex in $x_i$ for every fixed $x_{-i}\in\Omega_{-i}$}. \label{conti}
\end{assumption}

\begin{assumption}
	The pseudo-gradient mapping $F(x):=[\nabla_{x_1}J_1(x),$ $\cdots, \nabla_{x_N}J_N(x)]^\top\textcolor{black}{\in\mathbb{R}^N}$ is strongly monotone   with modulus $\mu$, i.e., there exists a positive constant $\mu$ such that $(F(x)-F(y))^\top(x-y)\geq \mu||x-y||^2$ for all $x,y\in \Omega$.\label{strongly}
\end{assumption}

Under Assumptions \ref{boundedness}, \ref{conti} and \ref{strongly}, there exists a unique Nash equilibrium $x^*$ \cite{scutari2014real}. Furthermore, $F(x^*)(x-x^*)\geq 0$.

\begin{assumption}
	$F(x)$ is $L$-Lipschitz continuous, i,e. there is a constant $L>0$ such that $\Vert F(x)-F(y) \Vert \leq L \Vert x-y \Vert $  for all $x,y\in \Omega$. \label{Lipschitz}
\end{assumption}

\color{black}
\begin{remark}
	Assumptions \ref{boundedness}--\ref{Lipschitz} are are standard assumptions in the Nash equilibrium seeking literature. For example, the strong monotonicity appears in \cite{Ye,pang2020distributed}.
\end{remark}

\color{black}

\section{Median-of-Means Nash Equilibrium Seeking} \label{sec_mom}

\subsection{Algorithm Design} \label{sec_al1}

At each iteration $k = 0,1,2,\dots$, every player $i$ maintains a local action
$x_{i,k}\in\Omega_i$. Let $\mathbf{x}_{ k}=[x_{1, k},\cdots, x_{N, k}]^\top\textcolor{black}{\in\mathbb{R}^N}$ represent the action vector at step $k=0,1,\cdots$. The player updates its action according to the following four steps.

\begin{enumerate}[leftmargin=*, label=\textbf{Step \arabic*:}]
	\item \textbf{Sample collection.}  Draw $m_k$ i.i.d.\ samples
	$\xi_{i,k}^1,\dots,\xi_{i,k}^{m_k}$ and compute the stochastic gradients
	\begin{equation}
		g_{i,k}^j = \nabla_{x_i} f_i(\mathbf{x}_k, \xi_{i,k}^j),\quad j=1,\dots,m_k. \label{step1}
	\end{equation}
	
	\item \textbf{Partition and within-block averaging.} Partition the $m_k$ samples into  $b_k$ disjoint blocks with  size $s_k$ according to the subsequent theorems. For each block
	$B_\ell, \ell=1,\cdots, b_k$, compute the empirical mean
	\begin{equation}
		\bar{g}_{i,k}^\ell = \frac{1}{s_k}\sum_{j\in B_\ell} g_{i,k}^j.\label{step2}
	\end{equation}
	
	\item \textbf{Median aggregation.} Compute the sample median of the $b_k$ block means:
	\begin{equation}
		\hat{g}_{i,k} = \operatorname{median}\{\bar{g}_{i,k}^1,\dots,\bar{g}_{i,k}^{b_k}\}.\label{step3}
	\end{equation}
	If $b_k$ is even, the average of the two middle values is taken.
	
	\item \textbf{Action update.} Perform a projected gradient step:
	\begin{equation}
		x_{i,k+1} = \mathbb{P}_{\Omega_i}\bigl[x_{i,k} - \alpha_k \hat{g}_{i,k}\bigr],\label{step4}
	\end{equation}
	where $\alpha_k > 0$ is the step-size.
\end{enumerate}

\subsection{Assumptions on the Per-Sample Gradient Noise}

The algorithm in Section~IV.A uses a stochastic gradient oracle. At each step $k$,
the oracle gives a noisy gradient value $\nabla_{x_i} f_i(\mathbf{x}_k, \xi_i)$
for each player $i$. 

Let $\mathcal{F}_k$ be the $\sigma$-algebra that contains all the randomness up to
step $k-1$. Thus, the current action $\mathbf{x}_k$ is known given $\mathcal{F}_k$.
For player $i$ at step $k$, let the $j$-th sample be $\xi_{i,k}^j$. The per-sample
noise is defined as
\begin{equation}
	\epsilon_{i,k}^j = g_{i,k}^j - \nabla_{x_i} J_i(\mathbf{x}_k)
	= \nabla_{x_i} f_i(\mathbf{x}_k, \xi_{i,k}^j) - \nabla_{x_i} J_i(\mathbf{x}_k).
\end{equation}
Here, $j=1,\dots,m_k$ is the index of the sample inside step $k$.

The following two assumptions are made for the noise.

\begin{assumption}
	\label{ass:unbiased}
	For every player $i$, every step $k\ge0$, and every sample index $j$, we have
	\begin{equation}
		\mathbb{E}\bigl[ \epsilon_{i,k}^j \mid \mathcal{F}_k \bigr] = 0 \quad \text{a.s.}
	\end{equation}
\end{assumption}

\begin{assumption}
	\label{ass:heavytail}
	There exist numbers $1<\delta\le 2$ and $\nu>0$ such that for all $i$, $k$, and $j$,
	\begin{equation}
		\mathbb{E}\bigl[ |\epsilon_{i,k}^j|^\delta \mid \mathcal{F}_k \bigr] \le \nu^\delta \quad \text{a.s.}
	\end{equation}
	which means that the $\delta$-th moment of the noise is bounded, even if the variance
	(when $\delta<2$) may be infinite.
\end{assumption}

\subsection{Key Lemmas} \label{sub_key}

The following lemma is a key conclusion to prove the almost sure convergence of the algorithm, which is well-known in optimization theory.

\begin{lemma} [Lemma 3 of \cite{polyak} at Page 45]  Let $Y_k$ be a sequence  with $Y_k\geq 0$,
	\begin{align}
	Y_{k+1}  \leq\left(1-u_k\right) Y_k+\beta_k,
	\end{align}
	where $0 <u_k \leq 1, \beta_k \geq 0$,  $\sum_{k=0}^{+\infty} u_k=+\infty$, and $\lim _{k \rightarrow +\infty} \beta_k / u_k=0$. Then, $Y_k \rightarrow 0$. \label{a}
\end{lemma}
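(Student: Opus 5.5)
The plan is to unroll the recursion and split the resulting sum into a "far past" part, which is damped by the infinite sum of the $u_k$, and a "recent" part, which is small because $\beta_k/u_k\to0$. Iterating $Y_{k+1}\le(1-u_k)Y_k+\beta_k$ from an index $K$ up to $k$ gives
\begin{align}
Y_{k+1}\le \prod_{t=K}^{k}(1-u_t)\,Y_K+\sum_{j=K}^{k}\beta_j\prod_{t=j+1}^{k}(1-u_t),
\end{align}
with the empty product taken to be $1$. This step uses that $0<u_t\le1$, so every factor $1-u_t$ is nonnegative and the inequality is preserved under multiplication.

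Fix $\varepsilon>0$. Since $\beta_j/u_j\to0$, choose $K$ so that $\beta_j\le\varepsilon u_j$ for all $j\ge K$. The second sum is then bounded by
\begin{align}
\varepsilon\sum_{j=K}^{k}u_j\prod_{t=j+1}^{k}(1-u_t)=\varepsilon\Bigl(1-\prod_{t=K}^{k}(1-u_t)\Bigr)\le\varepsilon.
\end{align}
The equality is a telescoping identity: $u_j\prod_{t=j+1}^{k}(1-u_t)=\prod_{t=j+1}^{k}(1-u_t)-\prod_{t=j}^{k}(1-u_t)$. I expect this identity to be the only step that needs any care.

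For the first term, use $1-u\le e^{-u}$ to get $\prod_{t=K}^{k}(1-u_t)\le\exp\bigl(-\sum_{t=K}^{k}u_t\bigr)$. This tends to $0$ as $k\to\infty$ because $\sum u_t=+\infty$, while $Y_K$ is a fixed finite number. Hence $\limsup_{k}Y_k\le\varepsilon$. Since $\varepsilon>0$ is arbitrary and $Y_k\ge0$, it follows that $Y_k\to0$.

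This argument is deterministic. In the paper's setting it is applied pathwise, that is, for almost every realization, which is why the conclusion holds almost surely.
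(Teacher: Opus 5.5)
Your proof is correct and complete. The unrolled bound is valid because every factor $1-u_t$ is nonnegative. The telescoping identity gives $\sum_{j=K}^{k}u_j\prod_{t=j+1}^{k}(1-u_t)=1-\prod_{t=K}^{k}(1-u_t)\le 1$. Finally, $\prod_{t=K}^{k}(1-u_t)\le\exp\bigl(-\sum_{t=K}^{k}u_t\bigr)\to0$, since any tail of a divergent series still diverges.

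The paper gives no argument of its own for this lemma; it only cites Polyak, so there is no in-paper proof to compare against. Your argument is the standard one. An equivalent shorter version avoids the telescoping step: $\beta_k\le\varepsilon u_k$ gives $Y_{k+1}-\varepsilon\le(1-u_k)(Y_k-\varepsilon)$ for $k\ge K$. Hence $Y_{k+1}-\varepsilon\le\prod_{t=K}^{k}(1-u_t)\max\{Y_K-\varepsilon,0\}\to0$.

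One feature of your write-up is worth keeping explicit. Because you start the iteration at an arbitrary index $K$, your proof shows that the recursion and $0<u_k\le1$ only need to hold for all sufficiently large $k$. That is exactly the form in which the paper uses the lemma in the proofs of Theorems~\ref{thm:convergence} and~\ref{thm:online}. There the recursion holds only beyond a path-dependent index, and $0<\mu\alpha_k\le1$ only for $k\ge K_e$. Your version therefore covers that application directly, whereas the statement as written asks for the hypotheses for every $k\ge0$.
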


 A key result for MoM, from \cite{bubeck2013bandits}, provides a high-probability bound. We restate it here as Lemma 2 where we adapt the notation to our game-theoretic context and derive an additional conclusion.

\begin{lemma}  \label{lemmaz}
	Let $\alpha\in(0,1]$ and $v>0$. Let $Z_1,\cdots,Z_m$ be i.i.d. real-valued random variables with mean $\bar{\theta}$ and centered $(1+\alpha)$-th absolute moment bounded by $u$, i.e., 
	\begin{align}
		 \mathbb{E}[\vert Z_1-\bar{\theta}\vert^{1+\alpha}] \leq u.
	\end{align}
	Dividing $m$ samples into $b$ disjoint blocks and each block has $s$ samples, and the remaining samples are at most $s-1$.
	For any $\gamma\in(0,1)$ such that $m\geq 16\ln(e^{1/8}\gamma^{-1})+2$, let
		\begin{align}
		 b = \lfloor \min\{8\ln(e^{1/8} \gamma^{-1}), \frac{m}{2} \} \rfloor, s=\lfloor \frac{m}{b}\rfloor. \label{c1}
	\end{align} 
 For each block $\ell = 1,\cdots, b$, compute the empirical mean  $\bar{Z}_\ell = \frac{1}{s}\sum_{j\in B_\ell}  Z_j$. Let $\hat{\theta}=\text{median}(\bar{Z}_1,\cdots,\bar{Z}_b )$. Then, with probability at least $1-\gamma$,
	\begin{align}
		\hat{\theta} \leq \bar{\theta}+(12u)^{\frac{1}{1+\alpha}}\left(\frac{16\ln(e^{1/8} \gamma^{-1})}{m}\right)^{\frac{\alpha}{1+\alpha}}. \label{part1}
	\end{align}
	The same bound holds for the lower tail 
		\begin{align}
		\bar{\theta} \leq \hat{\theta}+(12u)^{\frac{1}{1+\alpha}}\left(\frac{16\ln(e^{1/8} \gamma^{-1})}{m}\right)^{\frac{\alpha}{1+\alpha}}. \label{part2}
	\end{align}
	\end{lemma}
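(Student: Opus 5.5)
We follow the standard median-of-means argument of \cite{bubeck2013bandits}, in three steps: a bound on a single block mean, a binomial count over the blocks, and a conversion of the resulting deviation into the stated form.

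\textbf{Step 1: a single block mean.} Fix a block $B_\ell$. We first bound the $(1+\alpha)$-th moment of its centered mean. Since $1+\alpha\in(1,2]$, we use the von Bahr--Esseen type inequality for sums of independent centered variables:
\begin{align*}
\mathbb{E}\Bigl|\sum_{j\in B_\ell}(Z_j-\bar\theta)\Bigr|^{1+\alpha}\le 2\, s\, u .
\end{align*}
Dividing by $s^{1+\alpha}$ gives $\mathbb{E}|\bar Z_\ell-\bar\theta|^{1+\alpha}\le 2u\,s^{-\alpha}$. Markov's inequality then gives
\begin{align*}
\mathbb{P}\bigl(\bar Z_\ell>\bar\theta+\eta\bigr)\le \frac{2u}{s^{\alpha}\eta^{1+\alpha}} .
\end{align*}
We choose $\eta=(12u)^{1/(1+\alpha)}s^{-\alpha/(1+\alpha)}$, which makes this probability at most $1/6<1/4$. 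The constant $12$ leaves slack for the rounding in the next steps.

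\textbf{Step 2: counting the blocks.} The median can exceed $\bar\theta+\eta$ only if at least $b/2$ block means exceed it. This holds for odd $b$ directly. For even $b$ the median is the average of the two middle values, so it exceeds the threshold only if the larger middle value does, which again forces at least $b/2$ blocks above the threshold. The blocks are disjoint and hence independent. The count of blocks above the threshold is therefore dominated by a $\mathrm{Bin}(b,1/4)$ variable. Hoeffding's inequality bounds the probability of at least $b/2$ successes by $\exp(-2b(1/4)^2)=e^{-b/8}$.

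\textbf{Step 3: matching the stated constants.} With $b=\lfloor 8\ln(e^{1/8}\gamma^{-1})\rfloor\ge 8\ln(e^{1/8}\gamma^{-1})-1=8\ln\gamma^{-1}$, we get $e^{-b/8}\le\gamma$, which explains the $e^{1/8}$ factor. Next, $s=\lfloor m/b\rfloor\ge m/(2b)$ holds because $m\ge 2b$, which in turn follows from $b\le m/2$. Together with $b\le 8\ln(e^{1/8}\gamma^{-1})$ this gives $s^{-1}\le 16\ln(e^{1/8}\gamma^{-1})/m$. Hence
\begin{align*}
\eta\le(12u)^{\frac{1}{1+\alpha}}\Bigl(\frac{16\ln(e^{1/8}\gamma^{-1})}{m}\Bigr)^{\frac{\alpha}{1+\alpha}} ,
\end{align*}
which yields (\ref{part1}). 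The hypothesis $m\ge16\ln(e^{1/8}\gamma^{-1})+2$ ensures that the minimum defining $b$ is attained at the logarithmic term, and that $b\ge1$ and $s\ge 1$. Any leftover samples, at most $s-1$ of them, are simply discarded. Applying the same argument to $-Z_j$ gives the lower-tail bound (\ref{part2}).

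\textbf{Main obstacle.} The delicate step is the single-block moment bound. We need a constant small enough, here $2$ via von Bahr--Esseen for exponents $p\le 2$, that together with Markov's inequality and the floor losses the final constant fits within $12$. If that constant is too tight, we would instead use symmetrization, which gives $\mathbb{E}|\sum(Z_j-\bar\theta)|^{p}\le 2^{p}\,s\,u$, and recheck the arithmetic, allowing a per-block failure probability up to $1/4$.
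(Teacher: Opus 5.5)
Your proof is correct and follows the same route as the paper. The paper takes \eqref{part1} directly from Lemma~2 of \cite{bubeck2013bandits}, whose proof is your single-block moment/Markov bound followed by a Hoeffding count over the independent blocks; your von Bahr--Esseen constant $2$ gives per-block failure probability $1/6$, below the $1/4$ the Hoeffding step needs. The paper then obtains \eqref{part2} as you do, by applying \eqref{part1} to $W_j=-Z_j$ and using $\operatorname{median}(-\bar Z_1,\dots,-\bar Z_b)=-\hat\theta$.

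Drop the symmetrization fallback in your last paragraph. Its constant $2^{1+\alpha}$ gives per-block failure probability $2^{1+\alpha}/12$, which exceeds $1/4$ for $\alpha$ close to $1$, so it would not reproduce the constant $12$; the main argument does not need it.
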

	\begin{proof}
		Inequality  \eqref{part1} is the same as Lemma 2 of \cite{bubeck2013bandits}. 	Inequality  \eqref{part2} can be obtained by symmetry. The proof for \eqref{part2} is put at Appendix \ref{proof_lemmaz} for completeness.
	\end{proof}
	
By Lemma \ref{lemmaz}, we can get the following conclusion for the MoM gradient estimate error, which will be used in the convergence analysis.

\begin{lemma}[Conditional Tail Bound for Median-of-Means Gradient Estimate]
	\label{lem:cond_tail_bound}
	Let Assumptions \ref{ass:unbiased} and \ref{ass:heavytail} hold. For any iteration $k\ge 0$, any player $i\in\{1,\dots,N\}$,
	and any confidence level $\gamma\in(0,1)$, choose the sample size $m_k\geq 16\ln(e^{1/8}\gamma^{-1})+2$
	and the block parameters $b_k, s_k$ as deterministic integers satisfying condition \eqref{c1}
	with $m = m_k$. Define the error threshold
	\begin{align}
		\mathcal{E}_k(\gamma) := C_1 \left( \frac{16\ln(e^{1/8}\gamma^{-1})}{m_k} \right)^{\frac{\delta-1}{\delta}}, \label{13}
	\end{align}
	where $C_1 = (12\nu^\delta)^{1/\delta}$ and $\delta\in(1,2]$ is the moment exponent
	from Assumption \ref{ass:heavytail}. Then, the median-of-means gradient estimate $\hat{g}_{i,k}$
	satisfies the conditional probability bound
	\begin{align}
		\mathbb{P}\bigl( |\hat{g}_{i,k} - \nabla_{x_i}J_i(\mathbf{x}_k)| > \mathcal{E}_k(\gamma) \mid \mathcal{F}_k \bigr) \le 2 \gamma \quad \text{a.s.}
	\end{align}
\end{lemma}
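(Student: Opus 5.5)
The plan is to apply Lemma \ref{lemmaz} conditionally on $\mathcal{F}_k$, once for each tail, and then combine the two one-sided bounds with a union bound.

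\textbf{Step 1: conditioning.} Fix $k$, $i$ and $\gamma$, and condition on $\mathcal{F}_k$. Then $\mathbf{x}_k$ is fixed, so $\nabla_{x_i}J_i(\mathbf{x}_k)$ is a deterministic number. The samples $\xi_{i,k}^1,\dots,\xi_{i,k}^{m_k}$ are drawn i.i.d. at step $k$, so conditionally on $\mathcal{F}_k$ the values $Z_j:=g_{i,k}^j$, $j=1,\dots,m_k$, are i.i.d. real random variables. I will implicitly use that fresh samples are independent of the past, which is the intended reading of Step 1 of the algorithm.

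\textbf{Step 2: checking the hypotheses of Lemma \ref{lemmaz}.} By Assumption \ref{ass:unbiased}, the conditional mean is $\bar\theta=\nabla_{x_i}J_i(\mathbf{x}_k)$. By Assumption \ref{ass:heavytail},
\begin{align}
\mathbb{E}\bigl[|Z_1-\bar\theta|^{\delta}\mid\mathcal{F}_k\bigr]\le\nu^\delta .
\end{align}
Taking $\alpha=\delta-1\in(0,1]$ and $u=\nu^\delta$ puts us in the setting of Lemma \ref{lemmaz}. The exponents then match:
\begin{align}
\frac{1}{1+\alpha}=\frac{1}{\delta},\qquad \frac{\alpha}{1+\alpha}=\frac{\delta-1}{\delta},\qquad (12u)^{1/(1+\alpha)}=C_1 .
\end{align}
Hence the deviation term in \eqref{part1} and \eqref{part2} is exactly $\mathcal{E}_k(\gamma)$. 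The remaining conditions also hold: $m_k\ge 16\ln(e^{1/8}\gamma^{-1})+2$ is assumed, and $b_k,s_k$ are deterministic and satisfy \eqref{c1}. Finally, the MoM estimate $\hat g_{i,k}$ built in \eqref{step2}--\eqref{step3} is precisely the estimator $\hat\theta$ of Lemma \ref{lemmaz}.

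\textbf{Step 3: union bound.} By \eqref{part1}, $\mathbb{P}(\hat g_{i,k}-\bar\theta>\mathcal{E}_k(\gamma)\mid\mathcal{F}_k)\le\gamma$. By \eqref{part2}, $\mathbb{P}(\bar\theta-\hat g_{i,k}>\mathcal{E}_k(\gamma)\mid\mathcal{F}_k)\le\gamma$. The event $\{|\hat g_{i,k}-\bar\theta|>\mathcal{E}_k(\gamma)\}$ is the union of these two events, so the union bound gives the conditional probability bound $2\gamma$ almost surely.

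\textbf{Main obstacle.} The delicate step is justifying that Lemma \ref{lemmaz}, an unconditional statement about i.i.d. variables, can be applied conditionally. I would handle this with a regular conditional distribution. Given $\mathcal{F}_k$, the samples are i.i.d. with a law that depends measurably on $\mathbf{x}_k$, so the lemma applies pointwise for almost every realization. Integrating this pointwise bound over any $A\in\mathcal{F}_k$ then yields the inequality almost surely. The deterministic choice of $b_k$ and $s_k$ matters here: it ensures the partition does not depend on the samples, so the blocks remain disjoint, independent, and identically distributed under the conditional law.
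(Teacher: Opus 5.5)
Your proposal is correct and matches the paper's proof: the paper also fixes $\omega$ in a full-measure set where the conditional mean and $\delta$-th moment bounds hold, treats the $g_{i,k}^j$ as i.i.d.\ under the conditional law given $\mathcal{F}_k$, sets $\alpha=\delta-1$ and $u=\nu^\delta$, and applies both tails of Lemma~\ref{lemmaz} to obtain $2\gamma$. Your explicit union bound and your remarks on regular conditional distributions and deterministic block sizes only spell out details the paper leaves implicit.
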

\begin{proof}
See Appendix \ref{conditional}.
\end{proof}

\subsection{Convergence Theorem}

The following theorem establishes the almost sure convergence of the proposed algorithm.

\begin{theorem}[Almost Sure Convergence]
	\label{thm:convergence}
	Let Assumptions \ref{boundedness}--\ref{ass:heavytail} hold. Consider the algorithm described in Section \ref{sec_al1} with
	step-size sequence $\alpha_k = b (k+1)^{-a}$ for some $a\in(0,1]$ and $b>0$, and sample
	size $m_k = c\lceil (k+1)^\beta \rceil$ with any $\beta>0$ and integer $c\geq 1$. Choose the block parameters
	$b_k, s_k$ according to \eqref{c1} with $m=m_k$ and confidence level $\gamma_k = 1/(k+1)^2$.
	Then the sequence of action profiles $\{\mathbf{x}_k\}$ generated by the algorithm
	converges almost surely to the unique Nash equilibrium $x^*$, i.e.,
	\begin{align}
	\lim_{k\to\infty} \lVert \mathbf{x}_k - x^* \rVert = 0 \quad \text{a.s.}
	\end{align}
\end{theorem}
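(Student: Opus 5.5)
We will run a pathwise (almost-sure) argument that combines Borel--Cantelli with Lemma \ref{a}. Let $e_{i,k}:=\hat g_{i,k}-\nabla_{x_i}J_i(\mathbf{x}_k)$ and $e_k:=[e_{1,k},\dots,e_{N,k}]^\top$.

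\textbf{Step 1: the MoM error is eventually small on almost every path.} With $\gamma_k=(k+1)^{-2}$ we have $\ln(e^{1/8}\gamma_k^{-1})=1/8+2\ln(k+1)$. Since $m_k\ge c(k+1)^\beta$ grows polynomially, the requirement $m_k\ge 16\ln(e^{1/8}\gamma_k^{-1})+2$ holds for all $k\ge k_0$ with $k_0$ deterministic. Only finitely many iterates precede $k_0$, and they all stay in the compact set $\Omega$, so we may ignore them. For $k\ge k_0$, Lemma \ref{lem:cond_tail_bound} and a union bound over players give
\begin{align*}
\mathbb{P}\bigl(\exists i:\ |e_{i,k}|>\mathcal{E}_k(\gamma_k)\bigr)\le 2N\gamma_k=2N(k+1)^{-2},
\end{align*}
where we take expectations of the conditional bound. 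This is summable in $k$. By Borel--Cantelli, almost surely there is a random $K(\omega)$ such that $\|e_k\|\le \sqrt{N}\,\mathcal{E}_k(\gamma_k)$ for all $k\ge K(\omega)$. Moreover,
\begin{align*}
\mathcal{E}_k(\gamma_k)\le C_1\bigl(16(1/8+2\ln(k+1))/(c(k+1)^\beta)\bigr)^{(\delta-1)/\delta}=O\bigl((\ln k)^{(\delta-1)/\delta}(k+1)^{-\beta(\delta-1)/\delta}\bigr)\to 0.
\end{align*}

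\textbf{Step 2: a deterministic one-step recursion.} Write the stacked update as $\mathbf{x}_{k+1}=\mathbb{P}_\Omega[\mathbf{x}_k-\alpha_k(F(\mathbf{x}_k)+e_k)]$ and note that $x^*=\mathbb{P}_\Omega[x^*-\alpha_kF(x^*)]$. Nonexpansiveness of the projection gives
\begin{align*}
\|\mathbf{x}_{k+1}-x^*\|\le \|\mathbf{x}_k-x^*-\alpha_k(F(\mathbf{x}_k)-F(x^*))\|+\alpha_k\|e_k\|.
\end{align*}
Expanding the square and using Assumptions \ref{strongly} and \ref{Lipschitz} bounds the first term by $\sqrt{1-2\mu\alpha_k+L^2\alpha_k^2}\,\|\mathbf{x}_k-x^*\|$. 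Since $\alpha_k\to0$, for large $k$ this is at most $(1-\tfrac{\mu}{2}\alpha_k)\|\mathbf{x}_k-x^*\|$.

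To avoid cross terms, work with $Y_k=\|\mathbf{x}_k-x^*\|$ itself rather than its square. This yields, for $k\ge \max\{K(\omega),k_1\}$,
\begin{align*}
Y_{k+1}\le (1-u_k)Y_k+\beta_k,\qquad u_k=\tfrac{\mu}{2}\alpha_k,\quad \beta_k=\alpha_k\sqrt N\,\mathcal{E}_k(\gamma_k).
\end{align*}

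\textbf{Step 3: apply Lemma \ref{a} pathwise.} Since $a\in(0,1]$, $\sum u_k=\infty$. Enlarging $k_1$ if necessary gives $u_k\le 1$. The ratio $\beta_k/u_k=2\sqrt N\mathcal{E}_k(\gamma_k)/\mu\to0$ by Step 1. Shifting the time index to start at $\max\{K(\omega),k_1\}$, Lemma \ref{a} gives $Y_k\to0$ on the almost-sure event from Step 1, which proves the theorem.

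The main obstacle is Step 1. The error bound is a conditional, per-iteration, high-probability statement, and it must be turned into an almost-sure eventual bound. The choice $\gamma_k=(k+1)^{-2}$ makes the failure probabilities summable, and since the residual bound also decays whenever $\beta>0$, the Borel--Cantelli route should go through. Two minor points need care: the validity threshold $k_0$, and the fact that the contraction factor is below $1$ only for large $k$. Both are handled by discarding finitely many iterates, which is harmless because $\Omega$ is compact.
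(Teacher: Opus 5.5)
Your proof is correct, and its skeleton is the paper's. Borel--Cantelli with $\gamma_k=(k+1)^{-2}$ gives $\Vert\epsilon_k^{\mathrm{MoM}}\Vert\le\sqrt{N}\,\mathcal{E}_k(\gamma_k)$ eventually on almost every path, and Lemma \ref{a} is then applied pathwise from a random starting index. The genuine difference is the one-step recursion. The paper works with the squared distance $V_k=\Vert\mathbf{x}_k-x^*\Vert^2$. It splits the cross term $-2\alpha_k(\epsilon_k^{\mathrm{MoM}})^\top(\mathbf{x}_k-x^*)$ by Young's inequality and bounds $\Vert\hat g_k\Vert^2\le 2G^2+2\Vert\epsilon_k^{\mathrm{MoM}}\Vert^2$ via $\Vert F\Vert\le G$ on $\Omega$. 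This yields $V_{k+1}\le(1-\mu\alpha_k)V_k+C_3\alpha_k\Vert\epsilon_k^{\mathrm{MoM}}\Vert^2+2G^2\alpha_k^2$, and Assumption \ref{Lipschitz} is used there only to produce $G$. You instead compare with the fixed point $x^*=\mathbb{P}_\Omega[x^*-\alpha_kF(x^*)]$ and pull the error out linearly by the triangle inequality. You then use the classical contraction factor $\sqrt{1-2\mu\alpha_k+L^2\alpha_k^2}$, which uses Lipschitz continuity genuinely. The paper's squared recursion has the practical advantage that it is reused almost verbatim in the proofs of Theorems \ref{thm:convergence_rate}--\ref{thm:rate_online}. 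Your norm recursion has no additive $O(\alpha_k^2)$ term, because the $\alpha_k^2$ contribution enters as $L^2\alpha_k^2\Vert\mathbf{x}_k-x^*\Vert^2$ rather than as $G^2\alpha_k^2$. Fed into Lemma \ref{lem:chung-log}, with $\mu/2$ sharpened to any $\mu'<\mu$, it would give $\Vert\mathbf{x}_k-x^*\Vert^2=O\bigl((\ln k/k^{\beta})^{2(\delta-1)/\delta}\bigr)$. This holds under the weaker condition $\mu b>\beta(\delta-1)/\delta$ and drops the separate $1/k$ term of Theorem \ref{thm:convergence_rate}. For the almost-sure statement itself, both routes need only $\mathcal{E}_k(\gamma_k)\to0$, which holds for every $\beta>0$.
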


\begin{proof}
See Appendix \ref{proof_t1}.
\end{proof}

\subsection{Convergence Rate}
	
	The following  lemma  extends the classical Chung's lemma (Lemma 5 of \cite{polyak} at Page 46) to handle logarithmic factors, which will be used to establish the almost sure convergence rate.
	
	\begin{lemma}[Chung's Lemma with Logarithmic Factors]\label{lem:chung-log}
		Let $\{Y_k\}_{k=k_0}^\infty$ be a sequence of nonnegative real numbers. Suppose that there exist constants $r>p\ge 0$, $d>0$, $\tau\ge 0$ and an integer $k_0\ge 1$ such that for all $k\ge k_0$,
		\begin{align}
			Y_{k+1}\le \Big(1-\frac{r}{k}\Big)Y_k+\frac{d(\ln k)^\tau}{k^{p+1}}. \label{re_Y}
		\end{align}
		Then there exists an integer $K_g\geq k_0$ which depends on $p$, $\tau$, $r$, $k_0$ only and a constant $A:= \max\Big\{\frac{2d}{r-p},\ \frac{Y_{K_g}K_g^p}{(\ln K_g)^\tau}\Big\}+1>0$ such that for all $k\ge K_g$,
		\begin{align}
			Y_k\le A\frac{(\ln k)^\tau}{k^p}.
		\end{align}
	\end{lemma}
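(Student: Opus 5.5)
We compare $Y_k$ with the candidate bound $\phi(k):=(\ln k)^\tau/k^p$ and prove by induction that $Y_k\le A\phi(k)$ for all $k\ge K_g$. The induction step is: assuming $Y_k\le A\phi(k)$, the recursion \eqref{re_Y} gives
\begin{align*}
Y_{k+1}\le \Big(1-\frac{r}{k}\Big)A\frac{(\ln k)^\tau}{k^p}+\frac{d(\ln k)^\tau}{k^{p+1}},
\end{align*}
where we need $1-r/k\ge 0$, i.e.\ $k\ge r$. It therefore suffices to show
\begin{align*}
\Big(1-\frac{r}{k}\Big)A\frac{(\ln k)^\tau}{k^p}+\frac{d(\ln k)^\tau}{k^{p+1}}\le A\frac{(\ln(k+1))^\tau}{(k+1)^p}.
\end{align*}
Since $\ln(k+1)\ge\ln k$, the right-hand side is at least $A(\ln k)^\tau/(k+1)^p$. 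Dividing by $(\ln k)^\tau/k^p$ (which requires $k\ge2$), the sufficient condition becomes
\begin{align*}
A\Big(1-\frac{r}{k}\Big)+\frac{d}{k}\le A\Big(\frac{k}{k+1}\Big)^p .
\end{align*}

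The main step is a lower estimate of $(k/(k+1))^p=(1+1/k)^{-p}$. By convexity of $t\mapsto(1+t)^{-p}$ we have $(1+1/k)^{-p}\ge 1-p/k$, so it suffices that $A(1-r/k)+d/k\le A(1-p/k)$, i.e.\ $A(r-p)\ge d$. This holds because $A\ge 2d/(r-p)$. The factor $2$ in the definition of $A$ gives slack, and this slack would absorb second-order terms if one preferred a Taylor bound $(1+1/k)^{-p}\ge 1-p/k-O(1/k^2)$. In that case we would choose $K_g$ so that $C_p/k\le (r-p)/2$, which again depends only on $p,r$. Note that $\tau$ drops out because the logarithm is monotone. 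Even so, we would define
\begin{align*}
K_g:=\max\{k_0,\ \lceil r\rceil,\ 3\}
\end{align*}
so that $K_g$ depends only on $p,\tau,r,k_0$, as the statement requires. The choice $K_g\ge3$ ensures $\ln k>0$.

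The base case is $k=K_g$. There $Y_{K_g}\le \frac{Y_{K_g}K_g^p}{(\ln K_g)^\tau}\,\phi(K_g)\le A\phi(K_g)$ by the second entry of the max in $A$; the extra $+1$ only helps. Induction then yields $Y_k\le A(\ln k)^\tau/k^p$ for all $k\ge K_g$.

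The only delicate point is the requirement $1-r/k\ge0$ used when multiplying the inductive hypothesis. If $r/k>1$ the coefficient would be negative, so the inequality could not be propagated, and this is why $K_g\ge\lceil r\rceil$ is needed. The remaining care is in ensuring that the comparison $(1+1/k)^{-p}\ge1-p/k$ is valid for all $p\ge0$. It is, by Bernoulli's inequality applied to the convex function $(1+t)^{-p}$, so no further restriction on $K_g$ is needed.
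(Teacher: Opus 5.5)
Your proof is correct, and its skeleton matches the paper's. Both compare $Y_k$ with $U_k=A(\ln k)^\tau/k^p$, establish the supersolution inequality $U_{k+1}\ge(1-\frac{r}{k})U_k+\frac{d(\ln k)^\tau}{k^{p+1}}$, get the base case from the second entry of the max in $A$, and propagate using $1-\frac{r}{k}\ge0$. The paper's $W_k=Y_k-U_k$ with $W_{k+1}\le(1-\frac{r}{k})W_k$ is just your induction rewritten.

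The difference is in how the supersolution inequality is verified. The paper Taylor-expands $(1+1/k)^{-p}$ and $(\ln(k+1)/\ln k)^\tau$ and bounds the remainder by $|s_k|\le C_4/k^2$ for $k\ge K_f$. It then takes $K_g$ large enough that $r-p+\frac{\tau}{\ln k}+ks_k\ge\frac{r-p}{2}$. That is where the factor $2$ in $A$ is spent, and $K_g$ stays implicit through the unspecified $C_4,K_f$.

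You instead use two exact inequalities valid for all $k\ge2$. The first is $(\ln(k+1))^\tau\ge(\ln k)^\tau$, which discards the favorable $\frac{\tau}{k\ln k}$ term. The second is Bernoulli's $(1+1/k)^{-p}\ge1-\frac{p}{k}$, consistent with the positive second-order term $\frac{p(p+1)}{2k^2}$ in the paper's expansion. This gives the explicit threshold $K_g=\max\{k_0,\lceil r\rceil,3\}$, independent of $p$ and $\tau$. It also shows that $A(r-p)\ge d$ already suffices, so the factor $2$ and the $+1$ in $A$ are slack.

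The paper's asymptotic route is heavier but more robust. It only needs $\frac{\tau}{\ln k}+ks_k\to0$, so it would also cover, for instance, $\tau<0$, where your monotonicity step runs the wrong way.
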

	
	\begin{proof}
	See Appendix \ref{appd_1}.
	\end{proof}

	We now apply Lemma~\ref{lem:chung-log} to derive the almost sure convergence rate stated in Theorem \ref{thm:convergence} for the case $a=1$. The rate can be stated as follows.

	\begin{theorem}[Almost Sure Convergence Rate] \label{thm:convergence_rate}
	Let Assumptions \ref{boundedness}--\ref{ass:heavytail} hold. Consider the algorithm described in Section \ref{sec_al1} with
	step-size sequence $\alpha_k = b (k+1)^{-1}$ for some  $b>0$, and sample
	size $m_k = c\lceil (k+1)^\beta \rceil$ with any $\beta>0$ and integer $c\geq 1$. The parameter $b$ is selected such that  \(\mu b >  \max\{1, \beta \cdot \frac{2(\delta-1)}{\delta}\)\}. Choose the block parameters
	$b_k, s_k$ according to \eqref{c1} with $m=m_k$ and confidence level $\gamma_k = 1/(k+1)^2$. Then, for almost every sample path 
		\(\omega\), there exists a constant \(A_3(\omega) > 0\) and an integer 
		\(K_r(\omega)\) such that \(\forall k \ge K_r(\omega)\),
	\begin{align}
			\Vert \mathbf{x}_k - x^* \Vert^2  
			\le A_3(\omega) \max\left\{ \frac{1}{k}, \left( \frac{\ln k}{k^{\beta}} \right)^{\frac{2(\delta-1)}{\delta}} \right\}. \label{eq:combined_thm}
	\end{align}
	 	In particular, when \(\delta = 2\) and \(\beta \ge 1\), we have $\forall k \ge K_r(\omega)$,
	\begin{align}
		\Vert \mathbf{x}_k - x^* \Vert^2  \le A_3 (\omega) \frac{\ln k }{k}. \label{eq:combined2_thm}
	\end{align}
	\end{theorem}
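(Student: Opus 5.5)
Let $e_k:=\hat{\mathbf g}_k-F(\mathbf x_k)$ denote the stacked MoM estimation error and $Y_k:=\Vert\mathbf x_k-x^*\Vert^2$. The plan is to show that, along almost every sample path, $e_k$ is eventually bounded deterministically by $\mathcal E_k(\gamma_k)$. This turns the stochastic recursion into a deterministic one of the form \eqref{re_Y}, to which Lemma~\ref{lem:chung-log} applies.

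\textbf{Step 1 (eventual error bound).} We apply Lemma~\ref{lem:cond_tail_bound} with $\gamma_k=(k+1)^{-2}$. For each player, $\mathbb P(|\hat g_{i,k}-\nabla_{x_i}J_i(\mathbf x_k)|>\mathcal E_k(\gamma_k))\le 2(k+1)^{-2}$; the conditional bound integrates to an unconditional one. Since $\sum_k N\cdot 2(k+1)^{-2}<\infty$, the Borel--Cantelli lemma gives, for a.e. $\omega$, an index $K_1(\omega)$ such that for all $k\ge K_1(\omega)$,
\begin{align}
\Vert e_k\Vert\le \sqrt N\,C_1\Big(\tfrac{16\ln(e^{1/8}(k+1)^2)}{c\lceil (k+1)^\beta\rceil}\Big)^{\frac{\delta-1}{\delta}}\le C_2\Big(\tfrac{\ln k}{k^\beta}\Big)^{\frac{\delta-1}{\delta}}=:\eta_k ,
\end{align}
with a deterministic constant $C_2$ (for $k\ge2$). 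The condition $m_k\ge 16\ln(e^{1/8}\gamma_k^{-1})+2$ holds for large $k$, since $m_k$ grows polynomially while the right-hand side grows logarithmically; for small $k$ only finitely many indices are affected. This is the same mechanism presumably used in Theorem~\ref{thm:convergence}.

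\textbf{Step 2 (one-step recursion).} We use nonexpansiveness of the projection and $x^*=\mathbb P_\Omega[x^*-\alpha_kF(x^*)]$. This gives
\[
Y_{k+1}\le\Vert \mathbf x_k-x^*-\alpha_k(F(\mathbf x_k)-F(x^*))-\alpha_k e_k\Vert^2 .
\]
Expanding, and applying Assumptions~\ref{strongly} and \ref{Lipschitz} together with compactness of $\Omega$ (so that $\Vert\mathbf x_k-x^*\Vert\le D$ and $\Vert F(\mathbf x_k)-F(x^*)\Vert\le LD$), we obtain
\[
Y_{k+1}\le(1-2\mu\alpha_k+L^2\alpha_k^2)Y_k+2\alpha_k\Vert e_k\Vert\,\Vert\mathbf x_k-x^*-\alpha_k(F(\mathbf x_k)-F(x^*))\Vert+\alpha_k^2\Vert e_k\Vert^2 .
\]
The cross term is handled with Young's inequality, $2\alpha_k\eta_k\sqrt{Y_k}\,(1+L\alpha_k)\le \epsilon\mu\alpha_k Y_k+\frac{(1+L\alpha_k)^2}{\epsilon\mu}\alpha_k\eta_k^2$, for a small $\epsilon>0$. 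With $\alpha_k=b/(k+1)$, for $k$ large this yields
\[
Y_{k+1}\le\Big(1-\frac{r}{k}\Big)Y_k+\frac{C_3}{k^2}+\frac{C_4(\ln k)^{\frac{2(\delta-1)}{\delta}}}{k^{1+\beta\frac{2(\delta-1)}{\delta}}},
\]
where $r$ can be made arbitrarily close to $2\mu b$, say $r\in(\max\{1,\beta\tfrac{2(\delta-1)}{\delta}\},\mu b]$ by choosing $\epsilon$ appropriately; the $L^2\alpha_k^2Y_k\le L^2b^2D^2/k^2$ term is absorbed into $C_3/k^2$.

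\textbf{Step 3 (apply Lemma~\ref{lem:chung-log}).} Set $p:=\min\{1,\beta\tfrac{2(\delta-1)}{\delta}\}$ and $\tau:=\tfrac{2(\delta-1)}{\delta}$. Both forcing terms are bounded by $d(\ln k)^\tau/k^{p+1}$ for $k\ge3$ with $d=C_3+C_4$, and $r>p$ is guaranteed by the hypothesis on $\mu b$. Lemma~\ref{lem:chung-log}, started at $k_0=K_1(\omega)$ (enlarged as needed), gives $Y_k\le A(\ln k)^\tau/k^p$, where $A$ depends on $\omega$ through $Y_{K_g}$ and $K_g$. This bound is within the stated maximum up to the log factor on the $1/k$ branch. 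To match \eqref{eq:combined_thm} exactly, I would apply the lemma separately: with $p=1,\tau=0$ when $\beta\tfrac{2(\delta-1)}{\delta}\ge1$ (there the noise term is $O(k^{-2})$ up to logs, which I would absorb by slightly lowering $p$ or keeping $\tau$), and with $p=\beta\tfrac{2(\delta-1)}{\delta}<1$ otherwise. The corollary for $\delta=2$, $\beta\ge1$ then follows with $\tau=1$ and $p=1$.

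\textbf{Main obstacle.} I expect the main obstacle to be the bookkeeping in the borderline case $\beta\tfrac{2(\delta-1)}{\delta}\ge1$, where the log factor from the noise term competes with the clean $1/k$ term. Here one must verify that the maximum in \eqref{eq:combined_thm} really dominates; I would handle this by keeping $\tau>0$ and bounding $1/k\le\ln k/k$. A second obstacle is ensuring that the random starting index $K_1(\omega)$ only affects the constant $A_3(\omega)$.
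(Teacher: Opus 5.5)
Your outline works, but it differs from the paper in both the recursion and the use of Lemma~\ref{lem:chung-log}. Your case $\theta:=\beta\frac{2(\delta-1)}{\delta}\ge 1$ also needs correcting; below, $\varpi:=\frac{2(\delta-1)}{\delta}$.

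\textbf{How the paper proceeds.} The paper does not use the fixed point $x^*=\mathbb P_\Omega[x^*-\alpha_kF(x^*)]$. It expands $\Vert\mathbf x_k-\alpha_k\hat g_k-x^*\Vert^2$ and uses $F(\mathbf x_k)^\top(\mathbf x_k-x^*)\ge\mu V_k$. It applies Young's inequality with unit weight, giving contraction $1-\mu\alpha_k$, and bounds $\Vert\hat g_k\Vert^2\le2G^2+2\Vert e_k\Vert^2$. The term $2G^2\alpha_k^2$, plus $\mu bD/k^2$ from replacing $1/(k+1)$ by $1/k$, is a genuine $k^{-2}$ forcing. That is the source of the $1/k$ branch and of the requirement $\mu b>1$. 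The paper then avoids any case split by superposition. Since $1-\mu b/k\ge0$ eventually, $V_k\le Z^{(1)}_k+Z^{(2)}_k$. Here $Z^{(1)}$, started at $V_{K_m}$, carries only the noise forcing, and $Z^{(2)}$, started at $0$, carries only the $C_5/k^2$ forcing. Lemma~\ref{lem:chung-log} is applied to each, with $(p,\tau)=(\theta,\varpi)$ and $(1,0)$ respectively. Adding the two bounds gives the maximum uniformly in $\theta$.

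\textbf{What your route buys.} Your fixed-point subtraction gives more than you use. The deterministic second-order term is $L^2\alpha_k^2Y_k$, which is proportional to $Y_k$. So for large $k$ it can be absorbed into the contraction, together with the $1/(k+1)$ versus $1/k$ discrepancy: $1-(2-\epsilon)\mu\alpha_k+L^2\alpha_k^2\le1-r/k$ for any fixed $r<(2-\epsilon)\mu b$. Then no $k^{-2}$ forcing remains. One application of Lemma~\ref{lem:chung-log} with $p=\theta$, $\tau=\varpi$ gives $Y_k\le A(\omega)(\ln k/k^\beta)^{\varpi}$. This is stronger than \eqref{eq:combined_thm} and needs no case distinction.

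\textbf{The flaw in Step 3.} As written, you keep the $C_3/k^2$ term and split into cases. Your proposed remedies for $\theta\ge1$ fail when $\theta>1$. In that regime the maximum in \eqref{eq:combined_thm} is of order $1/k$. So neither of your remedies proves the claim:
\begin{itemize}
\item ``keeping $\tau>0$ and bounding $1/k\le\ln k/k$'' gives a bound of order $(\ln k)^{\varpi}/k$;
\item ``slightly lowering $p$'' gives order $k^{-p'}$ with $p'<1$.
\end{itemize}
The correct treatment is simpler:
\begin{itemize}
\item For $\theta>1$, $(\ln k)^{\varpi}k^{-(1+\theta)}\le k^{-2}$ for all large $k$. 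No logarithm survives, and $p=1$, $\tau=0$ applies verbatim.
\item For $\theta=1$, keep $\tau=\varpi$. The resulting $(\ln k)^{\varpi}/k$ is exactly the second branch $(\ln k/k^\beta)^{\varpi}$.
\item For $\theta<1$, your choice $p=\theta$, $\tau=\varpi$ is right.
\end{itemize}
With this correction, or with the paper's superposition, or with the absorption above, your argument proves the theorem, including the $\delta=2$, $\beta\ge1$ corollary.
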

	
	\begin{proof}
	See Appendix \ref{app_theorem2}.
\end{proof}

\begin{remark}
The rate reduces to $\mathcal{O}(\frac{\ln k}{k})$ for the Gaussian noise $\delta=2$, which is worse than the standard rate $\mathcal{O}(\frac{1}{k})$. This is because the MoM estimator error bound in \eqref{part1} has a term $\ln(e^{1/8} \gamma^{-1})$. If $\gamma$ is set to be in a polynomial form as in the theorem, there will be a $\ln(k)$ term in the error. If $\gamma$ is set to be in an exponential form, the required samples per step will increase quite fast, which is not practical.
\end{remark}

\section{Improving MoM for Asymmetric Noise} \label{section_online}

The convergence analysis in the last section relies only on the noise conditions Assumptions \ref{ass:unbiased} and  \ref{ass:heavytail}, which
do not require the noise distribution to be symmetric. However, in practical
scenarios with asymmetric heavy-tailed noise, the MoM estimator
has a systematic bias that can slow down finite-sample convergence, because in this case the median does not equal to the mean. In this
section, we introduce the online bias correction method to handle this issue. 

\subsection{Algorithm Design by Online Bias Correction} \label{section_5}

At each iteration $k = 0,1,2,\ldots$, every player $i$ maintains a local action $x_{i,k}\in\Omega_i$.
Let $\mathbf{x}_k = [x_{1,k},\ldots,x_{N,k}]^{\top}\in\mathbb{R}^N$ be the action profile.

\begin{description}
	\item[Steps 1--3: ] 
	These steps are the same as in Section \ref{sec_al1}.
	In particular, draw $m_k$ i.i.d. samples, calculate $b_k$ and block size $s_k$,
	compute the stochastic gradients $g_{i,k}^j$ as in~\eqref{step1},
	form within-block averages $\bar{g}_{i,k}^{\ell}$ as in~\eqref{step2},
	and obtain the median-of-means estimate $\hat{g}_{i,k}$ as in~\eqref{step3}.
	
\item[Step 4: Online bias correction.]
Let $\eta_k\in[0,1]$ be a decaying confidence coefficient such that $\lim_{k\to\infty}\eta_k = 0$.
The corrected gradient estimate is formed by 
\begin{align}
	\tilde{g}_{i,k} = (1-\eta_k)\hat{g}_{i,k} + \eta_k \bar{g}_{i,k}, \label{eq:onlinecorr}
\end{align}
where $\bar{g}_{i,k}$ is the sample mean
	\begin{equation}
	\bar{g}_{i,k}  = \frac{1}{b_k}\sum_{\ell=1}^{b_k} \bar{g}_{i,k}^\ell.\label{step4_1}
\end{equation}

\item[Step 5: Action update.]
Perform a projected gradient step using the corrected gradient:
\begin{align}
	x_{i,k+1} = \mathbb{P}_{\Omega_i}\bigl[x_{i,k} - \alpha_k\tilde{g}_{i,k}\bigr], \label{eq:update_online}
\end{align}
where $\alpha_k>0$ is the step-size.
\end{description}

	\subsection{Convergence Theorem for Online Bias Correction}
	
	We state and prove the almost sure convergence of the algorithm with online bias correction. 
	
	\begin{theorem}[Almost Sure Convergence for MoM with Online Bias Correction]
		\label{thm:online}
		Let Assumptions  \ref{boundedness}--\ref{ass:heavytail} hold.
		Consider the algorithm in Section \ref{section_5} with step-size
		$\alpha_k = b(k+1)^{-a}$ for some $a\in(0,1]$ and $b>0$,
		sample size $m_k = c\lceil (k+1)^{\beta}\rceil$  for some $\beta>\frac{1}{ \delta-1}$ and integer $c\geq 1$. Choose the block parameters
		$b_k, s_k$ according to \eqref{c1} with $m=m_k$ and confidence level $\gamma_k = 1/(k+1)^2$. 
	Let the decay coefficient $\eta_k = \eta_0(k+1)^{-\rho}$ ($\eta_0>0,\rho>0$). 
		Then the sequence $\{\mathbf{x}_k\}$ generated by the algorithm converges almost surely to the unique Nash equilibrium $x^*$, i.e.
		\begin{align}
		\lim_{k\to\infty}\Vert\mathbf{x}_k - x^*\Vert = 0 \quad \text{a.s.}
		\end{align}
	\end{theorem}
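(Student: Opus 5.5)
We will follow the proof of Theorem \ref{thm:convergence}: the corrected estimate is compared with the true pseudo-gradient, and the resulting error is fed into Lemma \ref{a}. Write $e_{i,k}:=\tilde g_{i,k}-\nabla_{x_i}J_i(\mathbf{x}_k)$ and split it as
\begin{align}
e_{i,k}=(1-\eta_k)\bigl(\hat g_{i,k}-\nabla_{x_i}J_i(\mathbf{x}_k)\bigr)+\eta_k\bigl(\bar g_{i,k}-\nabla_{x_i}J_i(\mathbf{x}_k)\bigr).
\end{align}
We take $\mathbf{e}_k=[e_{1,k},\dots,e_{N,k}]^\top$. We use nonexpansiveness of the projection, the fixed-point identity $x^*=\mathbb{P}_\Omega[x^*-\alpha_kF(x^*)]$, strong monotonicity (Assumption \ref{strongly}) and Lipschitz continuity (Assumption \ref{Lipschitz}). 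These give
\begin{align}
\|\mathbf{x}_{k+1}-x^*\|^2\le (1-2\mu\alpha_k+2L^2\alpha_k^2)\|\mathbf{x}_k-x^*\|^2+2\alpha_k\|\mathbf{e}_k\|\,\|\mathbf{x}_k-x^*\|+2\alpha_k^2\|\mathbf{e}_k\|^2 .
\end{align}
Since $\Omega$ is compact, $\|\mathbf{x}_k-x^*\|\le D$. It therefore suffices to show $\|\mathbf{e}_k\|\to0$ almost surely. Then, for $k$ large, the conditions of Lemma \ref{a} hold with $u_k=\mu\alpha_k$ and $\beta_k=2\alpha_k D\|\mathbf{e}_k\|+2\alpha_k^2\|\mathbf{e}_k\|^2$. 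Here $\sum\alpha_k=\infty$ because $a\le1$, $\beta_k/u_k\to0$, and $\alpha_k^2 L^2$ is absorbed into $\mu\alpha_k$.

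\textbf{Median part.} This is handled exactly as in Theorem \ref{thm:convergence}. By Lemma \ref{lem:cond_tail_bound} with $\gamma_k=(k+1)^{-2}$, the event $|\hat g_{i,k}-\nabla_{x_i}J_i(\mathbf{x}_k)|>\mathcal{E}_k(\gamma_k)$ has conditional probability at most $2(k+1)^{-2}$. These probabilities are summable, so the conditional Borel--Cantelli lemma (or Borel--Cantelli after taking expectations) shows that almost surely only finitely many such events occur. Moreover $\mathcal{E}_k(\gamma_k)=O\bigl((\ln k/k^\beta)^{(\delta-1)/\delta}\bigr)\to0$. Since $0\le1-\eta_k\le1$, the first term tends to $0$ almost surely.

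\textbf{Mean part.} This is the main obstacle, because the sample mean has no exponential concentration under infinite variance. We will ignore the factor $\eta_k$ and prove directly that $\bar g_{i,k}-\nabla_{x_i}J_i(\mathbf{x}_k)\to0$ almost surely; this is the step that uses $\beta>1/(\delta-1)$.

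The sample mean averages $b_ks_k\ge m_k/2$ samples. For $k$ large, $m_k/2\ge 8\ln(e^{1/8}\gamma_k^{-1})$, so $b_k\ge1$. Conditionally on $\mathcal{F}_k$ the samples are i.i.d. with zero mean and $\delta$-th moment at most $\nu^\delta$. The von Bahr--Esseen inequality for $1<\delta\le2$ then gives
\begin{align}
\mathbb{E}\bigl[|\bar g_{i,k}-\nabla_{x_i}J_i(\mathbf{x}_k)|^\delta\mid\mathcal{F}_k\bigr]\le \frac{2\nu^\delta}{(b_ks_k)^{\delta-1}}\le \frac{2^{\delta}\nu^\delta}{c^{\delta-1}(k+1)^{\beta(\delta-1)}} .
\end{align}
Fix $\epsilon>0$. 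Markov's inequality bounds $\mathbb{P}(|\bar g_{i,k}-\nabla_{x_i}J_i(\mathbf{x}_k)|>\epsilon)$ by $O(k^{-\beta(\delta-1)})$. Because $\beta(\delta-1)>1$ this is summable, and Borel--Cantelli gives almost sure convergence to $0$.

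Combining the two parts yields $\|\mathbf{e}_k\|\to0$ almost surely. Lemma \ref{a} then gives $\|\mathbf{x}_k-x^*\|\to0$ almost surely. The remaining points to check are the remainder samples left out of the blocks, which only change $m_k$ to $b_ks_k\ge m_k/2$, and the case $\eta_0>1$. For the latter, $\eta_k\in[0,1]$ holds once $k$ is large, and we apply Lemma \ref{a} from that index onward.
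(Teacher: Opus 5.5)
Your proposal is correct and follows essentially the same route as the paper. It uses the same split of the corrected error: the median part is handled by Lemma~\ref{lem:cond_tail_bound} plus Borel--Cantelli, and the sample-mean part by von Bahr--Esseen, Markov and Borel--Cantelli, which is exactly where $\beta(\delta-1)>1$ enters. It then finishes with a pathwise application of Lemma~\ref{a}.

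The differences are cosmetic. For the mean part, the paper uses the decaying threshold $k^{-\zeta}$ with $0<\zeta<\frac{\beta(\delta-1)-1}{\delta}$ instead of a fixed $\epsilon$, so it can reuse that bound for the rate in Theorem~\ref{thm:rate_online}. For the recursion, the paper uses $F(\mathbf{x}_k)^\top(\mathbf{x}_k-x^*)\ge\mu\|\mathbf{x}_k-x^*\|^2$, Young's inequality and $\|F\|\le G$, where you use the fixed-point identity and Lipschitz continuity.
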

	
\begin{proof}
See Appendix \ref{appd_V1}.
\end{proof}

\begin{remark}
	Compared with Theorem \ref{thm:convergence}, this theorem requires an additional condition on $\beta$, i.e., $\beta>\frac{1}{ \delta-1}$. The online correction step employs the sample mean $\bar g_{i,k}$. 
	To guarantee almost sure convergence, the tail probability of the sample mean error must be summable, leading to the requirement $\beta(\delta-1) > 1$.
	This condition ensures that the $\delta$-th moment of $\bar\epsilon_{i,k}$ decays sufficiently fast, so that the sample mean error is eventually negligible with probability one. In practice, if the noise distribution is known to be symmetric, we shall use the method and conclusions given in Section \ref{sec_mom}, since it provides a milder condition for the parameters. The parameter $\beta$ represents the growth rate of the required samples per step, and thus its selection is quite important. 
\end{remark}

\subsection{Convergence Rate for Online Bias Correction}

The convergence rate of the algorithm with online bias correction is given in the following theorem. 

\begin{theorem}[Almost Sure Convergence Rate for MoM with Online Bias Correction]
	\label{thm:rate_online}
	Let Assumptions \ref{boundedness}--\ref{ass:heavytail} hold.
	Consider the algorithm in Section \ref{section_5} with with step-size $\alpha_k = b(k+1)^{-1}$ for some $b>0$,
	sample size $m_k = c\lceil (k+1)^\beta \rceil$ with $\beta > \frac{1}{\delta-1}$ and integer $c\ge 1$.
	Choose the block parameters $b_k, s_k$ according to \eqref{c1} with $m=m_k$,
	confidence level $\gamma_k = 1/(k+1)^2$,
	and decay coefficient $\eta_k = \eta_0 (k+1)^{-\rho}$ with $\eta_0>0$, $\rho>0$.
	Select $\zeta \in \bigl(0, \frac{\beta(\delta-1)-1}{\delta}\bigr)$ and choose $b$ such that
	$\mu b > \max\bigl\{ 1, \beta\frac{2(\delta-1)}{\delta}, 2\rho+2\zeta \bigr\}$.
	Then, for almost every sample path $\omega$, there exist a constant $B(\omega)>0$ and an integer $\kappa_r(\omega)$ such that $\forall k \ge \kappa_r(\omega)$,
	\begin{align}
		\Vert \mathbf{x}_k - x^* \Vert^2 \le B(\omega) \max\Bigl\{ \frac{1}{k}, \Bigl(\frac{\ln k}{k^\beta}\Bigr)^{\frac{2(\delta-1)}{\delta}}, \frac{1}{k^{2\rho+2\zeta}} \Bigr\}. \label{eq:rate_online}
	\end{align}
	In particular, when $\delta=2$, $\beta\ge 1$, and $2\rho+2\zeta \ge 1$, we have $\forall k \ge \kappa_r(\omega)$,
	\begin{align}
		\Vert \mathbf{x}_k - x^* \Vert^2 \le B(\omega) \frac{\ln k}{k}. \label{eq:rate_online_special}
	\end{align}
\end{theorem}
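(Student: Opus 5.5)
We will reduce the statement to the scalar recursion of Lemma \ref{lem:chung-log}, mirroring the proof of Theorem \ref{thm:convergence_rate}. The only new ingredient is the sample-mean term in \eqref{eq:onlinecorr}.

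\textbf{Error decomposition.} Write $\tilde g_{i,k}=\nabla_{x_i}J_i(\mathbf{x}_k)+e_{i,k}$ with
\begin{align}
e_{i,k}=(1-\eta_k)\hat\epsilon_{i,k}+\eta_k\bar\epsilon_{i,k},\qquad \hat\epsilon_{i,k}=\hat g_{i,k}-\nabla_{x_i}J_i(\mathbf{x}_k),\quad \bar\epsilon_{i,k}=\bar g_{i,k}-\nabla_{x_i}J_i(\mathbf{x}_k).
\end{align}
We use nonexpansiveness of the projection and $x^*=\mathbb{P}_\Omega[x^*-\alpha_kF(x^*)]$. Together with Assumptions \ref{strongly}--\ref{Lipschitz}, this gives
\begin{align}
\|\mathbf{x}_{k+1}-x^*\|^2\le(1-2\mu\alpha_k+2L^2\alpha_k^2)\|\mathbf{x}_k-x^*\|^2+2\alpha_k\|e_k\|\,\|\mathbf{x}_k-x^*\|+2\alpha_k^2\|e_k\|^2 .
\end{align}
Apply Young's inequality $2\alpha_k\|e_k\|\|\mathbf{x}_k-x^*\|\le \epsilon\mu\alpha_k\|\mathbf{x}_k-x^*\|^2+\frac{\alpha_k}{\epsilon\mu}\|e_k\|^2$ with small $\epsilon$. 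Since $\mu b$ strictly exceeds each target exponent, we can pick $\epsilon$ so that the contraction factor is $1-r/k$ with $r$ still larger than all the exponents. The recursion then becomes $Y_{k+1}\le(1-r/k)Y_k+C\|e_k\|^2/k$.

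\textbf{Bounding the MoM part.} As in Theorem \ref{thm:convergence_rate}, apply Lemma \ref{lem:cond_tail_bound} with $\gamma_k=(k+1)^{-2}$. Since $\sum_k 2N\gamma_k<\infty$, the conditional Borel--Cantelli lemma shows that a.s. $|\hat\epsilon_{i,k}|\le\mathcal{E}_k(\gamma_k)$ for all $k$ beyond some $K_1(\omega)$. This gives $|\hat\epsilon_{i,k}|^2\lesssim(\ln k/k^\beta)^{2(\delta-1)/\delta}$.

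\textbf{Bounding the sample-mean part (the main obstacle).} We need an almost-sure bound $|\bar\epsilon_{i,k}|\le k^{-\zeta}$ eventually. Use the von Bahr--Esseen inequality for sums of martingale differences with $\delta\in(1,2]$, applied to the $\lfloor m_k/s_k\rfloor s_k\ge m_k/2$ averaged samples. Conditional on $\mathcal{F}_k$ it gives
\begin{align}
\mathbb{E}[|\bar\epsilon_{i,k}|^\delta\mid\mathcal{F}_k]\le 2\nu^\delta (m_k/2)^{1-\delta}.
\end{align}
Markov's inequality then yields $\mathbb{P}(|\bar\epsilon_{i,k}|>k^{-\zeta}\mid\mathcal{F}_k)\lesssim k^{\zeta\delta-\beta(\delta-1)}$. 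This is summable exactly when $\zeta<\frac{\beta(\delta-1)-1}{\delta}$, which is the stated range. We must check that summability of the conditional probabilities passes to the unconditional ones; it does, because the bound is deterministic. Borel--Cantelli then gives the a.s. eventual bound, so $\eta_k^2|\bar\epsilon_{i,k}|^2\lesssim k^{-2\rho-2\zeta}$.

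\textbf{Conclusion.} Combining the three steps, for $k\ge K(\omega)$,
\begin{align}
Y_{k+1}\le\Big(1-\frac{r}{k}\Big)Y_k+\frac{C(\omega)}{k}\Big[\Big(\frac{\ln k}{k^\beta}\Big)^{\frac{2(\delta-1)}{\delta}}+k^{-2\rho-2\zeta}+\frac1k\Big],
\end{align}
where the $1/k$ term absorbs the $\alpha_k^2$ contributions. Let $p$ be the smallest of the exponents $1$, $2\beta(\delta-1)/\delta$ and $2\rho+2\zeta$, with $\tau\in\{0,2(\delta-1)/\delta\}$ chosen according to which exponent is smallest. Bound the bracket by a constant times $(\ln k)^\tau/k^{p}$ and apply Lemma \ref{lem:chung-log}; this requires $r>p$, which holds by the condition on $\mu b$. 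This yields \eqref{eq:rate_online} with $B(\omega)$ absorbing the path-dependent constants, and the maximum form follows.

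\textbf{Special case.} For $\delta=2$, $\beta\ge1$ and $2\rho+2\zeta\ge1$, the dominant term is $\ln k/k$, which gives \eqref{eq:rate_online_special}.
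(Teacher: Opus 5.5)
Your proposal is correct and follows the paper's proof in all essentials: Borel--Cantelli on the MoM tail bound, then von Bahr--Esseen, conditional Markov and Borel--Cantelli to get $|\bar\epsilon_{i,k}|\le k^{-\zeta}$ eventually, and finally a reduction to Lemma~\ref{lem:chung-log}. One small slip: the sample mean averages exactly $b_ks_k\ge m_k/2$ samples, not $\lfloor m_k/s_k\rfloor s_k$.

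The remaining differences are minor. You derive the one-step inequality from $x^*=\mathbb{P}_\Omega[x^*-\alpha_kF(x^*)]$, Lipschitz continuity and a tunable Young's inequality, which gives a contraction close to $2\mu\alpha_k$ rather than the paper's $\mu\alpha_k$, so the hypothesis on $\mu b$ is more than enough. You also apply Lemma~\ref{lem:chung-log} once to the slowest-decaying term, instead of splitting into the three comparison sequences $Z_k^{(1)},Z_k^{(2)},Z_k^{(3)}$ as the paper does.
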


\begin{proof}
See Appendix \ref{appd_th4}.
\end{proof}

\begin{remark}
	Although the online bias‑corrected MoM estimator does not provide a better convergence rate than
	as the plain MoM estimator in theory, it is specifically designed to mitigate the systematic bias that arises
	from asymmetric heavy‑tailed noise in finite‑sample regimes.
	Both methods are guaranteed to converge to the true Nash equilibrium.
	The correction is recommended when the noise distribution is severely skewed, as it can accelerate
	practical convergence by reducing the bias without requiring additional assumptions.
\end{remark}

\section{Simulation}
\label{Simulation}

We consider a smart grid demand response problem \cite{ye2016game}. In this problem, several electricity users decide how much power to use. The price of electricity is not fixed, and it increases when the total consumption of all users is high and decreases when it is low. Each user's cost depends not only on its own consumption, but also on the total consumption of all users through the price. Each user expects to minimize its own cost. 

We consider \(N=10\) users. The objective function of user \(i\) is
\begin{equation}
	f_i(x,\xi_i) = a_i (x_i - \hat{x}_i)^2 + \left(b\sum_{j=1}^{10}x_j + p_0 + \xi_i\right)x_i,
\end{equation}
where \(x_i\) is the energy consumption (kWh), \(\hat{x}_i\) is the reference consumption,  \(a_i\), $b$ and $p_0$ are positive coefficients and $ \xi_i$ is a random variable.  The constraint set is  \([0,150]\). The game parameters are listed in Table I.

\begin{table*}[htbp]
	\centering
	\caption{Game parameters.}
	\begin{tabular}{c|c}
		\hline
		Parameter & Value \\
		\hline
		Action bound & \([0, 150]\) kWh \\
		Reference consumption \(\hat{x}_i\) & \([78, 68, 54, 47, 60,\) \\
		& \(78, 58, 62, 50, 50]\) \\
		Coefficient \(a_i\) & \([1.1465, 1.0404, 1.0832, 0.8082, 1.1880,\) \\
		& \(1.1330, 0.8849, 0.8727, 0.8734, 0.9217]\) \\
		Price sensitivity \(b\) & 0.04 \\
		Base price \(p_0\) & 5 \\
		\hline
	\end{tabular}
\end{table*}

The cost of user \(i\) has two parts. The first part is a penalty \(a_i (x_i - \hat{x}_i)^2\). Here, \(\hat{x}_i\) is the user's normal consumption. The user gets a penalty for using more than \(\hat{x}_i\), because it costs extra energy. The user also gets a penalty for using less than \(\hat{x}_i\), because it reduces comfort. The second part is the payment for the electricity, which is the price times the consumption \(x_i\). Since the price depends on the total consumption, each user's decision affects the costs of other users. A Nash equilibrium is a consumption profile where no user can lower its own cost by changing its consumption alone, given that all other users keep their consumption fixed. 

The price is affected by random fluctuations, modeled by a random variable $ \xi_i$.   These include renewable energy variations, sudden changes in demand, and even international situations such as energy supply disruptions. These disturbances often cause large price jumps. Because of this, the noise in the price may be not in a  Gaussian form that has a good property. We model this noise using a heavy-tailed Pareto distribution. 

 The pseudo-gradient is strongly monotone with modulus \(\mu = 1.67\) and Lipschitz continuous. The unique Nash equilibrium  of this game is about $
	x^* = [66.4,55.4, 42.2,31.4,49.1, 66.3,
	43.5,47.2,35.5,36.2]^\top.
$

\subsection{Symmetric Heavy-Tailed Noise}

The noise $\xi_i$ is zero-mean and follows a symmetrized Pareto distribution with tail index $\alpha$. Specifically, we let
$\xi_i = S Z,$
where $S$ is a Rademacher random variable satisfying
$\mathbb{P}(S=1)=\mathbb{P}(S=-1)=\frac{1}{2},$
and $Z$ is independent of $S$ and follows a Pareto distribution with density
$p(z)=\alpha z^{-\alpha-1}, \qquad z\ge 1.$
Since $\mathbb{E}[S]=0$, we have
$\mathbb{E}[\xi_i]=\mathbb{E}[S]\mathbb{E}[Z]=0.$
Moreover, for any $0<\delta<\alpha$,
$\mathbb{E}[|\xi_i|^\delta]
=
\mathbb{E}[Z^\delta]
<\infty.$
Therefore, when $\alpha>1$, the noise has a finite $\delta$-th moment for some $1<\delta<\alpha$, which satisfies the assumptions. 

We compare five methods:
\begin{enumerate}
	\item \textbf{Gradient Clipping} \cite{sun2025distributed1}: $x_{i,k+1} = \mathbb{P}_{\Omega_i}[x_{i,k} - \alpha_k \,\mathrm{clip}(\nabla_{x_i}f_i(\mathbf{x}_k,\xi_{i,k}),\tau_k)]$, using one sample per iteration.
	\item \textbf{Clipped-SGDA} \cite{sadiev2023high}: $x_{k+1} = \mathbb{P}_{\Omega}[x_k - \gamma  \,\mathrm{clip}(F_{\xi_k}(x_k),\tau_k)]$, which is equivalent to applying clipping directly to the whole pseudo-gradient vector.
	\item \textbf{Clipped-SEG} \cite{sadiev2023high}: one extragradient step with two independent samples per iteration.
	\item \textbf{MoM} (this work): $m_k$ samples are divided into $b_k$ blocks of size $s_k$, the block means are computed, and their median is used as the gradient estimate.
	\item \textbf{MoM with fixed $m$} (a practical modification of the method in this work): In practice, the MoM method usually uses a fixed number of samples instead of that of going to infinity. Here we fix the number of samples in each iteration to $m=20$.
	
\end{enumerate}

For the methods in   \cite{sun2025distributed1} and this work, we use the same step size $\alpha_k = 1/(k+1)$. For the methods in \cite{sadiev2023high}, we take $\gamma=0.005$.  For the clipping-based methods, we use the same clipping threshold $\tau_k = 20 (k+1)^{0.2}$.  
The MoM method with growing $m$ uses $m_k = k+1$ samples per iteration, which satisfies Theorem \ref{thm:convergence}. 

Because MoM applies more samples at one iteration, 
we  make a fair comparison based on sample complexity. All runs stop after each player consumes $100,000$ local stochastic gradient samples. The horizontal axis in the figures represents the cumulative number of samples, which means that the MoM indeed has a less number of iteration.

We conducted 20 Monte Carlo trials and take the mean of the results. Fig. \ref{fig:sym-heavy} shows the performance for Pareto tail index $\alpha=1.8$.  Fig. \ref{fig:sym-heavy}(a) shows the experiment results with the $x$-axis representing the number of consumed samples.  
Fig. \ref{fig:sym-heavy}(b) shows the evolution of the error with the $x$-axis representing the iterations.  It can be seen that all five algorithms converge and the proposed methods have a better performance for both scales. Due to the total sample budget constraint, MoM completes only about 400 updates on the iteration count axis. Its magnitude is already significantly lower than those of the compared algorithms.

\begin{figure} [htbp]
	\centering
	\begin{subfigure}{0.4\textwidth}
		\centering
		\includegraphics[width=\textwidth]{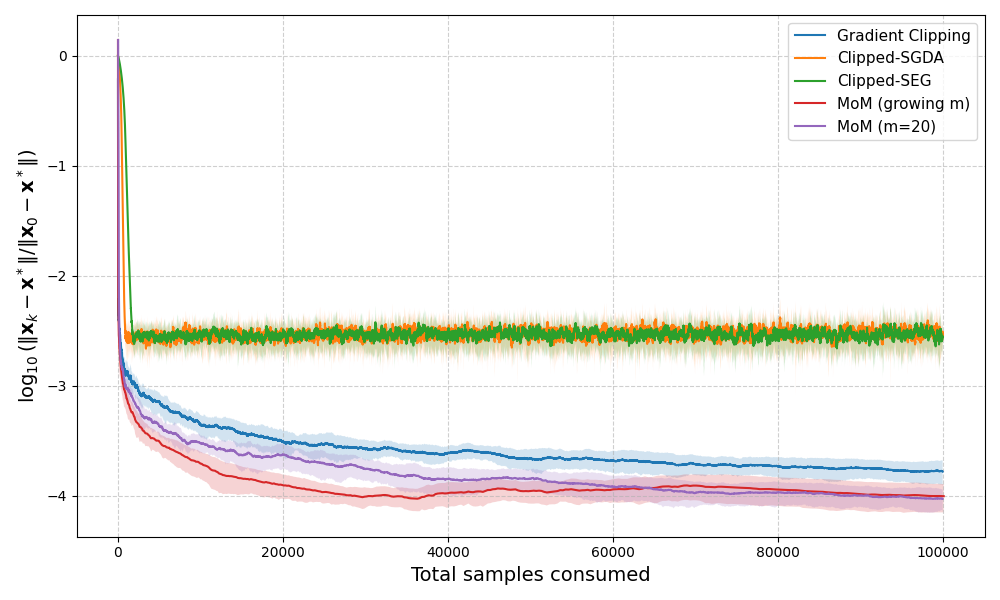}
		\caption{Relative error vs.~total number of consumed samples.}
		\label{fig:sym-tail18}
	\end{subfigure}
	\hfill
	\begin{subfigure}{0.4\textwidth}
		\centering
		\includegraphics[width=\textwidth]{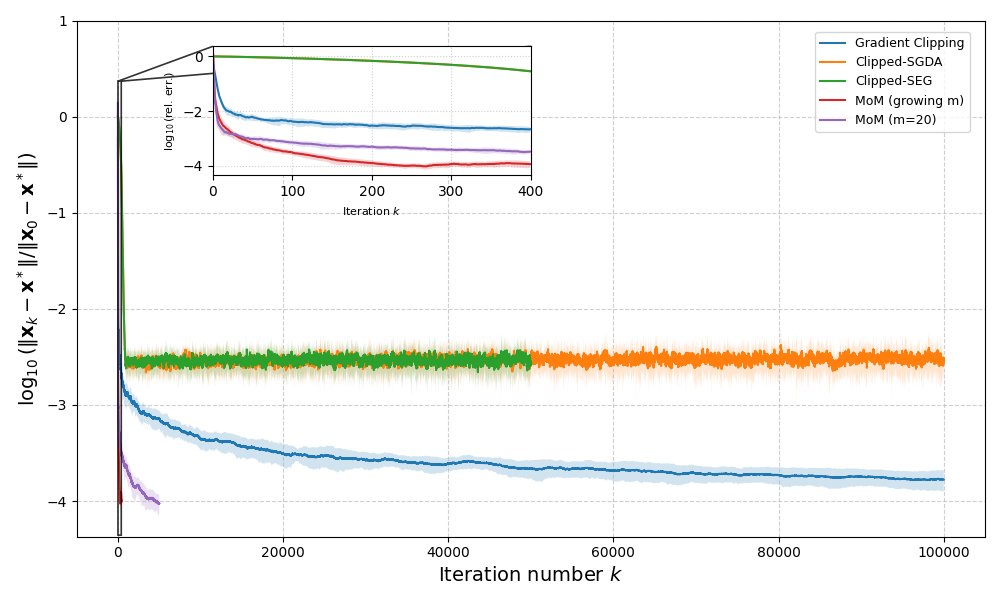}
		\caption{Relative error vs.~iterations.}
		\label{fig:sym-tail12}
	\end{subfigure}
	\caption{Convergence under symmetric heavy‑tailed noise with $\alpha=1.8$.}
	\label{fig:sym-heavy}
\end{figure}

To further evaluate the robustness of the algorithms under different tail heaviness, we record the final relative error after exhausting the $100,000$ sample budget in Table~\ref{fig:attack_comparison}. It shows that the MoM methods perform better than the clipping methods for all tail indices.

We also compare the five algorithms under two different symmetric heavy-tailed distributions, i.e., Student-$t$ with $1.5$ degrees of freedom and symmetric $\alpha$-stable with $\alpha=1.5$. The final relative errors are presented in Table~\ref{tab:dist-error}. It can be seen that the MoM methods achieve lower mean relative errors for these distributions.

\begin{table*}[htbp]
	\centering
	\caption{Final relative error (mean $\pm$ std) for different tail indices $\alpha$ after using  $100000$ samples.}
	\label{tab:tail-error}
	\small
	\begin{tabular}{c|ccccc}
		\hline
		$\alpha$ & Gradient Clipping & Clipped-SGDA & Clipped-SEG & MoM (growing $m$) & MoM ($m=20$) \\
		\hline
		2.0 & 1.31e-04±2.57e-05 & 2.22e-03±7.01e-04 & 1.98e-03±5.01e-04 & 9.76e-05±2.04e-05 & 9.60e-05±2.21e-05 \\
		1.5 & 2.46e-04±6.60e-05  &   4.74e-03±2.11e-03  &   4.49e-03±1.14e-03   &  1.14e-04±1.54e-05  &   1.17e-04±2.72e-05 \\
		1.2 & 4.57e-04±1.13e-04  &  8.48e-03±2.20e-03 &   7.92e-03±1.90e-03   &  1.25e-04±2.42e-05  &   1.46e-04±3.51e-05   \\      
		\hline
	\end{tabular}
\end{table*}

\begin{table*}[htbp]
	\centering
	\caption{Final relative error (mean $\pm$ std) for different symmetric heavy-tailed distributions after using  $100000$ samples.}
	\label{tab:dist-error}
	\small
	\begin{tabular}{c|ccccc}
		\hline
		Distribution & Gradient Clipping & Clipped-SGDA & Clipped-SEG & MoM (growing $m$) & MoM ($m=20$) \\
		\hline
		Student-$t$($df=1.5$)  &   2.27e-04±4.14e-05  &  3.81e-03±1.34e-03 &   4.26e-03±1.42e-03 &    6.65e-05±1.25e-05  &  7.34e-05±1.25e-05 \\
		Stable($\alpha=1.5$)  &  1.59e-04±3.94e-05 &   3.32e-03±1.46e-03  &  3.17e-03±1.05e-03  &  7.30e-05±1.58e-05  &  6.68e-05±1.27e-05  \\
		\hline
	\end{tabular}
\end{table*}

\subsection{Algorithm Performance under Gradient Attacks}

To further evaluate the robustness against malicious gradient attacks, we adopt a gradient-adaptive attack. For each sample, with probability $p$, the gradient is added by a large constant multiplied by the opposite sign of the true gradient, i.e., $-\mathrm{sign}(\nabla_{x_i}J_i(\mathbf{x}_k)) \times 100$. This kind of attack is more  destructive than random attacks since the gradient has an opposite direction.

We test two attack probabilities, $p=0.1$ and $p=0.3$, and compare the convergence behaviour of all five
algorithms under the same symmetric Pareto noise with tail index $1.8$.
Fig. \ref{fig:attack_comparison}  shows the relative error versus total consumed samples for the two attack probabilities. It can be seen that the MoM-based methods achieve a faster convergence rate, while the clipping-based methods are no longer effective as the case shown in Fig. 	\ref{fig:sym-heavy}. When the attack probability increases to $0.3$, the MoM methods lose their  advantage and
perform similarly to the clipping methods, since the probability of the corrupted gradients is too large.

\begin{figure}[htbp]
	\centering
	\begin{subfigure}{0.4\textwidth}
		\centering
		\includegraphics[width=\textwidth]{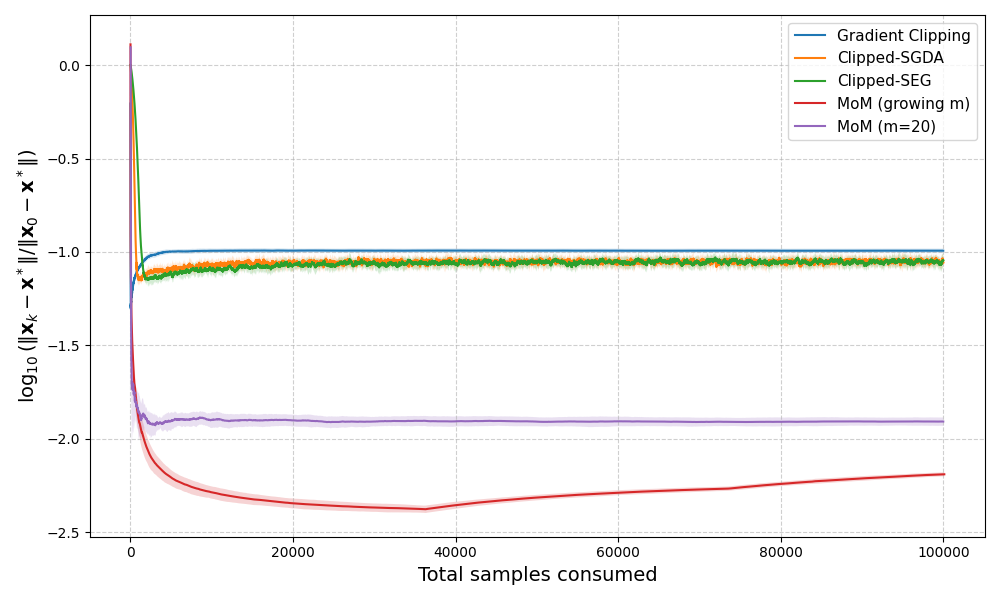}
		\caption{$p=0.1$}
		\label{fig:attack01}
	\end{subfigure}
	\hfill
	\\
	\begin{subfigure}{0.4\textwidth}
		\centering
		\includegraphics[width=\textwidth]{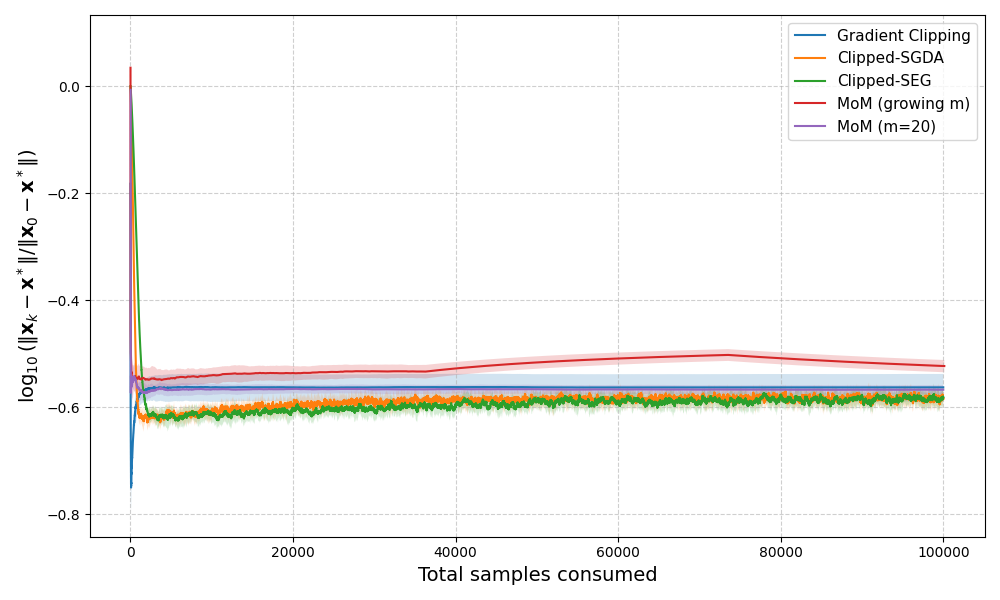}
		\caption{$p=0.3$}
		\label{fig:attack04_samples}
	\end{subfigure}
	\hfill
	\caption{Convergence under gradient attacks.}
	\label{fig:attack_comparison}
\end{figure}

\subsection{Asymmetric Heavy-Tailed Noise}

We now consider asymmetric heavy‑tailed noise.
The random variable $\xi_i$ follows a shifted Pareto distribution with tail index $\alpha$.
Let $Z\sim\mathrm{Pareto}(\alpha)$, i.e., $p_Z(z)=\alpha z^{-(\alpha+1)}$ for $z\ge 1$, and $\mathbb{E}[Z]=\frac{\alpha}{\alpha-1}$.
Define $\xi_i = Z - \mathbb{E}[Z]$, which preserves the right skewness of the Pareto law while ensuring zero mean.

We add the MoM with online bias correction for comparison, where $\eta_k = \,(k+1)^{-0.2}$. For the methods in  \cite{sun2025distributed1} and this work, we use the same step size $\alpha_k = 2/(k+1)$, where we increases the coefficients to satisfy the conditions in the theorems. 
For the methods in  \cite{sadiev2023high}, we take $\gamma=0.005$. 
All the algorithms that employ clipping share the same threshold $\tau_k = 20 (k+1)^{0.2}$.  Theorem \ref{thm:online} requires $\beta>\frac{1}{ \delta-1}$. In the simulation, for  $\alpha=1.5$, we take  $\beta=3$. We also conducted the simulation for a fixed $m$ even though it does not satisfy the conditions in Theorem \ref{thm:online}.

It can be seen from Fig. \ref{fig:asym-heavy} that the proposed correction strategy works and all corrected algorithms perform better than its plain version.

\begin{figure}[htbp]
		\centering
		\includegraphics[width=0.4\textwidth]{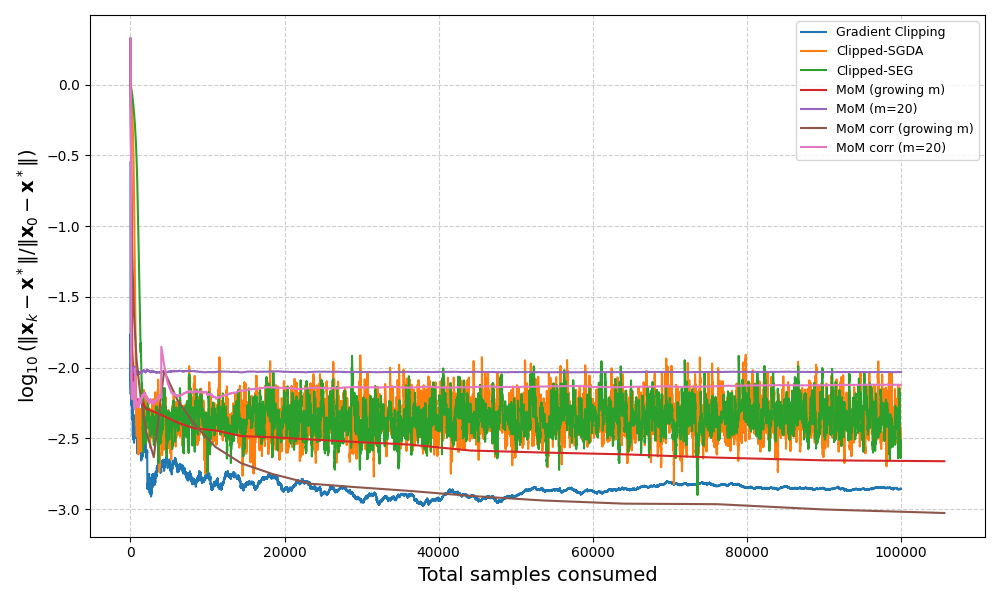}
		\label{fig:asym-tail18-beta21}
	\caption{Convergence under asymmetric heavy‑tailed noise with $\alpha=1.5$.}
	\label{fig:asym-heavy}
\end{figure}

\section{Conclusions}\label{S7}
This work investigated the Nash equilibrium seeking problem in non-cooperative games under heavy-tailed noise, where only a finite $\delta$-th moment with $1<\delta\leq2$ is required. A Nash equilibrium seeking algorithm based on the MoM robust estimator was proposed, which eliminates the need for a clipping threshold and naturally withstands adversarial gradient attacks. Under the strong monotonicity assumption, the algorithm was shown to converge almost surely to the unique Nash equilibrium, and an almost sure convergence rate was derived. To address the systematic bias arising from asymmetric noise distributions, an online bias correction strategy was further introduced, and its convergence guarantees were established. Simulation results show the effectiveness and efficiency of the proposed methods. Future research will focus on extending the framework to generalized Nash equilibrium problems and considering partial-decision information settings.

\bibliographystyle{IEEEtran}
\bibliography{bib}
\vspace{1cm}

\appendix
\subsection{Proof of inequality \eqref{part2} in Lemma \ref{lemmaz}} \label{proof_lemmaz}

Define $W_j=-Z_j$ for $j=1,\cdots, m$. Then, $W_1,\cdots, W_m$ are i.i.d. with mean $\bar{\theta}_W=-\bar{\theta}$ and satisfy the same moment condition 
	\begin{align}
	\mathbb{E}[\vert W_1-\bar{\theta}_W\vert^{1+\alpha}] \leq u.
\end{align}

Let $\bar{W}_\ell:=\frac{1}{s}\sum_{j\in B_\ell} W_j$ and $\hat{\theta}_W:= \text{median}(\bar{W}_1,\cdots,\bar{W}_b)$. By the property of the median under sign reversal,
	\begin{align}
	\hat{\theta}_W=\text{median}(-\bar{Z}_1,\cdots,-\bar{Z}_b)=-\hat{\theta}.
\end{align}

Inequality \eqref{part1} applied to $W$ yields, with probability at least $1-\gamma$,
	\begin{align}
	\hat{\theta}_W \leq \bar{\theta}_W+(12v)^{\frac{1}{1+\alpha}}\left(\frac{16\text{ln}(e^{1/8} \gamma^{-1})}{m}\right)^{\frac{\alpha}{1+\alpha}}, \label{parta}
\end{align}
which implies that 
	\begin{align}
	-\hat{\theta} \leq -\bar{\theta}+(12v)^{\frac{1}{1+\alpha}}\left(\frac{16\text{ln}(e^{1/8} \gamma^{-1})}{m}\right)^{\frac{\alpha}{1+\alpha}}. \label{partb}
\end{align}

\subsection{Proof of Lemma \ref{lem:cond_tail_bound}} \label{conditional}

Fix $k\ge0$ and $i$. The history $\mathcal{F}_k$ contains all randomness before step $k$,
so $\mathbf{x}_k$ is $\mathcal{F}_k$-measurable. Given $\mathcal{F}_k$, the per-sample
noises $\epsilon_{i,k}^j = g_{i,k}^j - \nabla_{x_i}J_i(\mathbf{x}_k)$ for $j=1,\dots,m_k$
are conditionally independent and identically distributed. This follows because
the samples $\xi_{i,k}^j$ are drawn independently from the same distribution.

By Assumption \ref{ass:unbiased}, $\mathbb{E}[\epsilon_{i,k}^j \mid \mathcal{F}_k] = 0$ a.s., and by
Assumption \ref{ass:heavytail}, $\mathbb{E}[|\epsilon_{i,k}^j|^\delta \mid \mathcal{F}_k] \le \nu^\delta$ a.s.
for some $\delta\in(1,2]$ and $\nu>0$. Let $\Omega_0$ be the set of all $\omega$ such that the conditional mean and moment
bounds hold at step $k$. By Assumptions \ref{ass:unbiased} and \ref{ass:heavytail}, $\mathbb{P}(\Omega_0)=1$.
Fix any $\omega\in\Omega_0$. In the conditional probability space given
$\mathcal{F}_k(\omega)$, the variables $Z_j = g_{i,k}^j$ are i.i.d.\ with mean
$\bar{\theta} = \nabla_{x_i}J_i(\mathbf{x}_k(\omega))$ and satisfy
\begin{align}
\mathbb{E}\bigl[ | g_{i,k}^j - \nabla_{x_i}J_i(\mathbf{x}_k(\omega))|^\delta \mid \mathcal{F}_k \bigr](\omega) \le \nu^\delta.
\end{align}
Set $\alpha = \delta-1$ and $u = \nu^\delta$. The conditions of Lemma \ref{lemmaz} are met. Applying the lemma gives
\begin{align}
\mathbb{P}\bigl( |\hat{g}_{i,k} - \nabla_{x_i}J_i(\mathbf{x}_k(\omega))| > \mathcal{E}_k(\gamma) \mid \mathcal{F}_k \bigr)(\omega) \le 2 \gamma.
\end{align}

Since the inequality holds for every $\omega\in\Omega_0$ and $\mathbb{P}(\Omega_0)=1$,
we obtain
\begin{align}
\mathbb{P}\bigl( |\hat{g}_{i,k} - \nabla_{x_i}J_i(\mathbf{x}_k)| > \mathcal{E}_k(\gamma) \mid \mathcal{F}_k \bigr) \le 2 \gamma \quad \text{a.s.}
\end{align}
This completes the proof.

\subsection{Proof of Theorem \ref{thm:convergence}} \label{proof_t1}
Let $\epsilon_{i,k}^{\mathrm{MoM}} = \hat{g}_{i,k} - \nabla_{x_i}J_i(\mathbf{x}_k)$
and $\epsilon_k^{\mathrm{MoM}} = [\epsilon_{1,k}^{\mathrm{MoM}},\dots,\epsilon_{N,k}^{\mathrm{MoM}}]^\top$.

According to the conditions in the theorem, there exists a $K_a$ such that $m_k\geq 16\ln(e^{1/8}\gamma_k^{-1})+2$ for any $k\geq K_a$.

By Lemma \ref{lem:cond_tail_bound},
\begin{align}
\mathbb{P}\bigl( |\epsilon_{i,k}^{\mathrm{MoM}}| > \mathcal{E}_k(\gamma_k) \mid \mathcal{F}_k \bigr) \le 2 \gamma_k \quad \text{a.s.}
\end{align}
Taking the expectation gives $\mathbb{P}(|\epsilon_{i,k}^{\mathrm{MoM}}| > \mathcal{E}_k(\gamma_k)) \le 2\gamma_k.$ 
Let $A_{i,k} = \{ |\epsilon_{i,k}^{\mathrm{MoM}}| > \mathcal{E}_k(\gamma_k) \}$.
Since $\sum_{k=0}^\infty\mathbb{P}(A_{i,k}) \leq 2\sum_{k=0}^\infty \gamma_k< \infty$, the Borel--Cantelli lemma implies
$\mathbb{P}(\limsup_{k\rightarrow \infty} A_{i,k}) = 0$. Thus, for almost every sample path $\omega$,
there exists $K_a\leq K_i(\omega) < \infty$ such that for all $k \ge K_i(\omega)$,
\begin{align}
|\epsilon_{i,k}^{\mathrm{MoM}}(\omega)| \le \mathcal{E}_k(\gamma_k).
\end{align}
Set $K_b(\omega) = \max_{1\le i\le N} K_i(\omega)$. Then for all $k \ge K_b(\omega)$,
\begin{align}
\Vert \epsilon_k^{\mathrm{MoM}}(\omega)\Vert \le \sqrt{N} \mathcal{E}_k(\gamma_k).
\end{align}

For large $k$, we can simplify the bound on $\mathcal{E}_k(\gamma_k)$.
According to \eqref{13}, letting 
$\gamma_k = 1/(k+1)^2$ and $m_k = c \lceil (k+1)^\beta \rceil$ with $\beta>0$ gives
\begin{align}
\mathcal{E}_k(\gamma_k) = C_1 \left( \frac{16\ln\bigl(e^{1/8}(k+1)^2\bigr)}{m_k} \right)^{\frac{\delta-1}{\delta}}.
\end{align}
Using $\ln(e^{1/8}(k+1)^2) = \frac{1}{8} + 2\ln(k+1) \le 3\ln(k+1)$ for all
$k\geq 1$, and noting $m_k \ge (k+1)^\beta$, we obtain
\begin{align}
\frac{16\ln(e^{1/8}\gamma_k^{-1})}{m_k} \le \frac{48\ln(k+1)}{(k+1)^\beta} \le 48 \frac{\ln(k+1)}{k^\beta}.
\end{align}
Therefore, for all  $k\geq 1$,
\begin{align}
\mathcal{E}_k(\gamma_k) \le& C_1 \left( 48 \frac{\ln(k+1)}{k^\beta} \right)^{\frac{\delta-1}{\delta}}
\notag\\
= &C_2 \left( \frac{\ln(k+1)}{k^\beta} \right)^{\frac{\delta-1}{\delta}}:=\mathcal{E}_k, \label{kesi1}
\end{align}
where $C_2 : = C_1 \cdot 48^{(\delta-1)/\delta}$ is a finite constant depending only on
$\delta$ and $\nu$.

Let  
\begin{align}
\bar{\eta}_k :=\sqrt{N} \mathcal{E}_k . \label{eta}
\end{align}
Hence, for almost every path,
\begin{equation}
	\Vert \epsilon_k^{\mathrm{MoM}}\Vert  \le \bar{\eta}_k \quad \text{for all } k \ge K_c(\omega):=\max\{1,K_b(\omega)\}.
	\label{eq:det_bound}
\end{equation}

Let $V_k: = \Vert \mathbf{x}_k - x^*\Vert ^2$. By the non-expansiveness of the projection,
\begin{align}
\Vert x_{i,k+1} - x_i^*\Vert ^2 \le \Vert x_{i,k} - \alpha_k \hat{g}_{i,k} - x_i^*\Vert ^2.
\end{align}

Expanding and summing over $i=1,\dots,N$ yields
\begin{align}
	V_{k+1} \le V_k - 2\alpha_k \hat{g}_k^\top (\mathbf{x}_k - x^*) + \alpha_k^2 \Vert \hat{g}_k\Vert ^2,
\end{align}
where $\hat{g}_k:=[\hat{g}_{1,k},\cdots,\hat{g}_{N,k}]^\top$.

Write $\hat{g}_k = F(\mathbf{x}_k) + \epsilon_k^{\mathrm{MoM}}$. Then
\begin{align}
	V_{k+1} \le& V_k - 2\alpha_k F(\mathbf{x}_k)^\top (\mathbf{x}_k - x^*) \notag\\
	&- 2\alpha_k (\epsilon_k^{\mathrm{MoM}})^\top (\mathbf{x}_k - x^*) + \alpha_k^2 \Vert \hat{g}_k\Vert ^2.\label{V2}
\end{align}

According to the strong monotonicity in Assumption \ref{strongly} and the equilibrium condition, we have
\begin{align}
	F(\mathbf{x}_k)^\top (\mathbf{x}_k - x^*) \ge \mu V_k. \label{1}
\end{align}

For the cross term, using the Young's inequality gives
\begin{align}
	-2(\epsilon_k^{\mathrm{MoM}})^\top (\mathbf{x}_k - x^*) \le& 2 \Vert\epsilon_k^{\mathrm{MoM}}\Vert \sqrt{V_k} \notag\\
	\le &\mu V_k + \frac{1}{\mu} \Vert\epsilon_k^{\mathrm{MoM}}\Vert^2. \label{2}
\end{align}

By Assumption \ref{Lipschitz} and the compactness of $\Omega$ in Assumption \ref{boundedness}, there exists $G>0$ such that $\Vert F(\mathbf{x})\Vert  \le G$ for all $\mathbf{x}\in\Omega$. Thus
\begin{align}
	\Vert \hat{g}_k\Vert ^2 \le 2G^2 + 2\Vert \epsilon_k^{\mathrm{MoM}}\Vert ^2.\label{3}
\end{align}

Substituting \eqref{1}--\eqref{3} into \eqref{V2} gives
\begin{align}
	V_{k+1} \le &(1 - \mu\alpha_k) V_k + \alpha_k \Bigl( \frac{1}{\mu} + 2\alpha_k \Bigr) \Vert \epsilon_k^{\mathrm{MoM}}\Vert ^2 \notag\\
	&+ 2G^2\alpha_k^2.
\end{align}

Since $\alpha_k \to 0$,  for $k\geq K_d$, $\frac{1}{\mu} + 2\alpha_k \le C_3$ for some $C_3>0$ and $K_d>0$. Hence, for all $k\geq K_d$,
\begin{equation}
	V_{k+1} \le (1 - \mu\alpha_k) V_k + C_3\alpha_k \Vert \epsilon_k^{\mathrm{MoM}}\Vert ^2 + 2G^2\alpha_k^2.
	\label{eq:V_rec}
\end{equation}

Fix a sample path $\omega$ for which \eqref{eq:det_bound} holds. For $k \ge K_c(\omega)$,
we have $\Vert \epsilon_k^{\mathrm{MoM}}\Vert  \le \bar{\eta}_k$. Substituting into \eqref{eq:V_rec} gives
\begin{align}
V_{k+1} \le (1 - \mu\alpha_k) V_k + \beta_k, \quad \forall k \ge \max\{ K_c(\omega),K_d\}, \label{eq1c}
\end{align}
where $\beta_k := C_3\alpha_k \bar{\eta}_k^2 + 2G^2\alpha_k^2$.

Let $\alpha_k = b/(k+1)^a$ with $a\in(0,1]$. Then $\sum_{k=0}^\infty \alpha_k = \infty$ and $\alpha_k \to 0$. There exists $K_e$ such that $0<\mu\alpha_k\leq1$ for $k\geq K_e$.
Moreover,
\begin{align}
	\frac{\beta_k}{\alpha_k} = &C_3 \bar{\eta}_k^2 + 2G^2\alpha_k \notag\\
	= &C_3 NC_2^2 \Bigl( \frac{\ln(k+1)}{k^\beta} \Bigr)^{\frac{2(\delta-1)}{\delta}} + \frac{2G^2b}{(k+1)^a} \to 0.
\end{align}

All conditions of Lemma \ref{a} are satisfied. Therefore, $V_k \to 0$ on this sample path. Since such paths have probabiliy one,
\begin{align}
\lim_{k\to\infty} \Vert \mathbf{x}_k - x^*\Vert  = 0 \quad \text{a.s.}
\end{align}

\subsection{Proof of Lemma \ref{lem:chung-log}} \label{appd_1}

	For $k\ge 2$ define
\begin{align}
	U_k=A\frac{(\ln k)^\tau}{k^p}.
\end{align}
 For $k=1$ we may set $U_1=1$ arbitrarily.

Thus, we have
\begin{align}
	\frac{U_{k+1}}{U_k}=\Big(\frac{k}{k+1}\Big)^p\Big(\frac{\ln(k+1)}{\ln k}\Big)^\tau.\label{u1}
\end{align}

According to the Taylor expansion, we have
\begin{align}
	\Big(1+\frac{1}{k}\Big)^{-p}&=1-\frac{p}{k}+\frac{p(p+1)}{2k^2}+O(k^{-3}),\notag\\
	\ln(k+1)&=\ln(k)+\ln(1+\frac{1}{k})\notag\\
	&=\ln k+\frac{1}{k}-\frac{1}{2k^2}+O(k^{-3}),\label{u2}
\end{align}
by which we obtain
\begin{align}
	\frac{\ln(k+1)}{\ln k}=1+\frac{1}{k\ln k}-\frac{1}{2k^2\ln k}+O\Big(\frac{1}{k^3\ln k}\Big).
\end{align}

Let $y:=\frac{\ln(k+1)}{\ln k}-1=\frac{1}{k\ln k}-\frac{1}{2k^2\ln k}+O\Big(\frac{1}{k^3\ln k}\Big)=\frac{1}{k\ln k}+O(k^{-2}(\ln k)^{-1})=O(\frac{1}{k\ln(k)})$. When $k\rightarrow\infty$, $y\rightarrow 0$. Thus, by the Taylor expansion, we have
\begin{align}
	(1+y)^\tau=1+\tau y+\frac{\tau(\tau-1)}{2}y^2+O(y^3). \label{y}
\end{align}
Substituting $y$ into \eqref{y} gives
\begin{align}
	(1+y)^\tau=&1+\tau\frac{1}{k\ln k}+\tau O(k^{-2}(\ln k)^{-1}) \notag\\
	& +\frac{\tau(\tau-1)}{2}O(\frac{1}{k^2(\ln(k))^2})+O(\frac{1}{k^3(\ln(k))^3}) \notag\\
	=&1+\frac{\tau}{k\ln k}+O\Big(\frac{1}{k^2\ln k}\Big).
\end{align}
i.e., 
\begin{align}
	\Big(\frac{\ln(k+1)}{\ln k}\Big)^\tau=1+\frac{\tau}{k\ln k}+O\Big(\frac{1}{k^2\ln k}\Big).\label{u3}
\end{align}

Based on \eqref{u1}, \eqref{u2} and \eqref{u3}, we have
\begin{align}
	\frac{U_{k+1}}{U_k}=1-\frac{p}{k}+\frac{\tau}{k\ln k}+s_k,
\end{align}
where the remainder $s_k$ satisfies $|s_k|\le C_4/k^2$ for all $k\ge K_f$ with suitable constants $C_4$ and $K_f$ which depend on $p$ and $\tau$ only.

From the above expansion we obtain
\begin{align}
	&U_{k+1}-\Big(1-\frac{r}{k}\Big)U_k\notag\\
	=&U_k\Big(\frac{U_{k+1}}{U_k}-1+\frac{r}{k}\Big)\notag\\
	=&U_k\Big(\frac{r-p}{k}+\frac{\tau}{k\ln k}+s_k\Big)\notag\\
	=&A\frac{(\ln k)^\tau}{k^{p+1}}\Big(r-p+\frac{\tau}{\ln k}+k s_k\Big). \label{u1_1}
\end{align}
Denote $T_k=r-p+\frac{\tau}{\ln k}+k s_k$. Because $|k s_k|\le C_4/k$ and $\frac{\tau}{\ln k}\to0$, we have $\lim_{k\to\infty}T_k=r-p>0$. Consequently, there exists $K_g\ge\max\{k_0,K_f,2\}$ such that for all $k\ge K_g$,
\begin{align}
	T_k\ge\frac{r-p}{2}>0. \label{u1_2}
\end{align}

Let
\begin{align}
	A=\max\Big\{\frac{2d}{r-p},\ \frac{Y_{K_g}K_g^p}{(\ln K_g)^\tau}\Big\}+1.
\end{align}
Thus, the following conclusion holds:
\begin{itemize}
	\item[(i)] For all $k\ge K_g$, using \eqref{u1_1}, \eqref{u1_2} and  $A\ge\frac{2d}{r-p}$ gives
	\begin{align}
		U_{k+1}-\Big(1-\frac{r}{k}\Big)U_k\ge A\frac{(\ln k)^\tau}{k^{p+1}}\cdot\frac{r-p}{2}\ge \frac{d(\ln k)^\tau}{k^{p+1}}.
	\end{align}
	\item[(ii)] At the initial index $k=K_g$,
	\begin{align}
		U_{K_g}=A\frac{(\ln K_g)^\tau}{K_g^p}\ge Y_{K_g}.
	\end{align}
\end{itemize}

Define $W_k:=Y_k-U_k$ for $k\ge K_g$. Thus,  $W_{K_g} =Y_{K_g}-U_{K_g}\le0$.

Using \eqref{re_Y} for $Y_k$ and property~(i), we obtain for all $k\ge K_g$,
\begin{align}
	W_{k+1}&=Y_{k+1}-U_{k+1}\notag\\
	&\le\Big(1-\frac{r}{k}\Big)Y_k+\frac{d(\ln k)^\tau}{k^{p+1}}-U_{k+1}\notag\\
	&=\Big(1-\frac{r}{k}\Big)(W_k+U_k)+\frac{d(\ln k)^\tau}{k^{p+1}}-U_{k+1}\notag\\
	&=\Big(1-\frac{r}{k}\Big)W_k-\Big[U_{k+1}-\Big(1-\frac{r}{k}\Big)U_k-\frac{d(\ln k)^\tau}{k^{p+1}}\Big]\notag\\
	&\le\Big(1-\frac{r}{k}\Big)W_k.
\end{align}
Since $1-\frac{r}{k}\ge0$ for all large $k$ (which can be ensured by taking $K_g\geq r$) and $W_{K_g}\le0$, we obtain that $W_k\le0$ for every $k\ge K_g$. Hence
\begin{align}
	Y_k\le U_k=A\frac{(\ln k)^\tau}{k^p},\qquad\forall k\ge K_g.
\end{align}

\subsection{Proof of Theorem \ref{thm:convergence_rate}} \label{app_theorem2}
	From \eqref{eq1c}, we know that for almost every sample 
path \(\omega\), there exists a finite integer \(K_h(\omega)\) such that for all \(k \ge K_h(\omega)\),
\begin{align}
	V_{k+1} \le \bigl(1 - \mu \alpha_k \bigr) V_k + \beta_k,
\end{align}
where \(V_k = \Vert \mathbf{x}_k - x^* \Vert^2\)  and
\begin{align}
	\beta_k = C_3 \alpha_k \bar{\eta}_k^2 + 2 G^2 \alpha_k^2,
\end{align}
with deterministic constants \(C_3, G\) independent of \(k\) and \(\omega\). By \eqref{eta}, 
\begin{align}
	\bar{\eta}_k^2 = N C_2^2 \left( \frac{\ln(k+1)}{k^\beta} \right)^{\frac{2(\delta-1)}{\delta}}, \label{eta2}
\end{align}
where \(C_2 = C_1 \cdot 48^{(\delta-1)/\delta}\) and \(C_1 = (12\nu^\delta)^{1/\delta}\).

Since \(\alpha_k = b/(k+1)\),  we have \(1 - \mu\alpha_k= 1 - \frac{\mu b}{k} + \frac{\mu b}{k(k+1)}\leq 1 - \frac{\mu b}{k} + \frac{\mu b}{k^2} \).

For all \(k \ge K_h(\omega)\),
\begin{align}
	V_{k+1} &\le \Bigl( 1 - \frac{\mu b}{k} + \frac{\mu b}{k^2} \Bigr) V_k + \beta_k \nonumber \\
	&= \Bigl( 1 - \frac{\mu b}{k} \Bigr) V_k + \frac{\mu b}{k^2} V_k + \beta_k. \label{eq:rec2}
\end{align}

By Assumption~1, the constraint set \(\Omega\) is compact. Since \(\mathbf{x}_k \in \Omega\) and 
\(x^* \in \Omega\), there exists a deterministic constant \(D > 0\) such that for all \(k\),
\begin{align}
	V_k = \Vert \mathbf{x}_k - x^* \Vert^2 \le D. \label{eq:Vbound}
\end{align}
This bound holds almost surely for every sample path.

Define a  deterministic constant
\begin{align}
	C_5 := \mu b D + 2 G^2 b^2. \label{eq:C7}
\end{align}

Thus,
\begin{align}
	\frac{\mu b}{k^2} V_k + 2 G^2 \alpha_k^2 \le \frac{C_5}{k^2}. \label{eq:combine}
\end{align}
Substituting \eqref{eq:combine} and the remaining part of \(\beta_k\) into \eqref{eq:rec2} gives
\begin{align}
	V_{k+1} \le \Bigl( 1 - \frac{\mu b}{k} \Bigr) V_k + C_3 \alpha_k \bar{\eta}_k^2 + \frac{C_5}{k^2},  k \ge K_h(\omega). \label{eq:rec3}
\end{align}

Using \(\alpha_k \le b/k\) and  \eqref{eta2}, we obtain for \(k \ge K_h(\omega)\),
\begin{align}
	C_3 \alpha_k \bar{\eta}_k^2 &\le C_3 \frac{b}{k} \cdot N C_2^2 \left( \frac{\ln(k+1)}{k^\beta} \right)^{\frac{2(\delta-1)}{\delta}} \nonumber \\
	&= b C_3 N C_2^2 \cdot \frac{\bigl( \ln(k+1) \bigr)^{\frac{2(\delta-1)}{\delta}}}{k^{1 + \beta \cdot \frac{2(\delta-1)}{\delta}}}. \label{eq:expand}
\end{align}
For \(k \ge 3\), we have \(\ln(k+1) \le \ln(k^2) = 2 \ln k\). Thus,
\begin{align}
	\bigl( \ln(k+1) \bigr)^{\frac{2(\delta-1)}{\delta}} \le 2^{\frac{2(\delta-1)}{\delta}} (\ln k)^{\frac{2(\delta-1)}{\delta}}.
\end{align}

Let
\begin{align}
	\theta:= \beta \cdot \frac{2(\delta-1)}{\delta}, \qquad 
	\varpi: = \frac{2(\delta-1)}{\delta}. \label{eq:gammanu}
\end{align}
and 
\begin{align}
	C_6 := b C_3 N C_2^2 \cdot 2^{\varpi}. \label{eq:C6}
\end{align}
Hence, for all \(k \ge K_l(\omega) :=\max\{K_h(\omega), 3\}\),
\begin{align}
	C_3 \alpha_k \bar{\eta}_k^2 \le C_6 \frac{(\ln k)^\varpi}{k^{\theta+1}}. \label{eq:w1a}
\end{align}

Combining \eqref{eq:rec3} and \eqref{eq:w1a}, we obtain 
for all \(k \ge K_l(\omega)\),
\begin{align}
	V_{k+1} \le \Bigl( 1 - \frac{\mu b}{k} \Bigr) V_k + w_k^{(1)} + w_k^{(2)}, \label{eq:finalrec}
\end{align}
where
\begin{align}
	w_k^{(1)} := C_6 \frac{(\ln k)^\varpi}{k^{\theta+1}}, \qquad
	w_k^{(2)} := \frac{C_5}{k^2}. \label{eq:weights}
\end{align}

Let $ K_m(\omega):=\max\{K_l(\omega),\mu b \}$. We define two auxiliary sequences \(\{Z_k^{(1)}\}_{k \ge K_m(\omega)}\) and 
\(\{Z_k^{(2)}\}_{k \ge K_m(\omega)}\) recursively as follows:
\begin{align}
	Z_{K_m(\omega)}^{(1)} &= V_{K_m(\omega)}, \quad 
	Z_{k+1}^{(1)} = \Bigl( 1 - \frac{\mu b}{k} \Bigr) Z_k^{(1)} + w_k^{(1)},   \label{eq:aux1} \\
	Z_{K_m(\omega)}^{(2)} &= 0, \quad 
	Z_{k+1}^{(2)} = \Bigl( 1 - \frac{\mu b}{k} \Bigr) Z_k^{(2)} + w_k^{(2)}. \label{eq:aux2}
\end{align}

By induction, we claim that for all \(k \geq K_m(\omega)\),
\begin{align}
	V_k \le Z_k^{(1)} + Z_k^{(2)}. \label{eq:comparison}
\end{align}
For \(k = K_m(\omega)\), we have
\begin{align}
	Z_{K_m(\omega)}^{(1)} + Z_{K_m(\omega)}^{(2)} = V_{K_m(\omega)},
\end{align}
so \eqref{eq:comparison} holds with equality.
Assume that \(V_k \le Z_k^{(1)} + Z_k^{(2)}\) for some \(k \ge K_m(\omega)\).
From \eqref{eq:finalrec} and the induction hypothesis, we obtain
\begin{align}
	V_{k+1} 
	&\le \Bigl( 1 - \frac{\mu b}{k} \Bigr) \bigl( Z_k^{(1)} + Z_k^{(2)} \bigr) + w_k^{(1)} + w_k^{(2)} \nonumber \\
	&= \Bigl[ \Bigl( 1 - \frac{\mu b}{k} \Bigr) Z_k^{(1)} + w_k^{(1)} \Bigr] 
	+ \Bigl[ \Bigl( 1 - \frac{\mu b}{k} \Bigr) Z_k^{(2)} + w_k^{(2)} \Bigr] \nonumber \\
	&= Z_{k+1}^{(1)} + Z_{k+1}^{(2)}, k\geq K_m(\omega)
\end{align}
where the first inequality uses the fact that \(1 - \frac{\mu b}{k} \ge 0\) for $k\geq K_m(\omega)$. This completes the induction, establishing \eqref{eq:comparison} 
for all \(k \ge K_m(\omega)\).

The recursion \eqref{eq:aux1} matches the form of Lemma \ref{lem:chung-log} with parameters 
\(r = \mu b\), \(p = \theta\), \(d = C_6\), \(\tau = \varpi\). 
By the hypothesis of Theorem \ref{thm:convergence_rate}, we have selected \(b\) such that \(\mu b > \theta\). 
Therefore, Lemma \ref{lem:chung-log} guarantees the existence of a  constant \(A_1(\omega) > 0\)  and   $K_p(\omega)\geq K_m(\omega)$ such that $\forall k \ge K_p(\omega)$,
\begin{align}
	Z_k^{(1)} \le A_1(\omega) \frac{(\ln k)^{\varpi}}{k^{\theta}} 
	=A_1(\omega) \left( \frac{\ln k}{k^{\beta}} \right)^{\frac{2(\delta-1)}{\delta}}. \label{eq:bound1}
\end{align}

The recursion \eqref{eq:aux2} corresponds to Lemma \ref{lem:chung-log} with \(r = \mu b\), \(p = 1\), 
\(d = C_5\), \(\tau = 0\). Since  \(\mu b > 1\), Lemma \ref{lem:chung-log} yields a constant \(A_2 (\omega)> 0\) and   $K_q(\omega)\geq K_m(\omega)$ such that $ \forall k \ge K_q(\omega)$,
\begin{align}
	Z_k^{(2)} \le A_2(\omega) \frac{1}{k}. \label{eq:bound2}
\end{align}

Combining the comparison inequality \eqref{eq:comparison} with \eqref{eq:bound1} and 
\eqref{eq:bound2}, we obtain  \(\forall k \ge K_r(\omega):=\max\{K_p(\omega),K_q(\omega)\} \),
\begin{align}
	V_k 
	\le A_1 (\omega)\left( \frac{\ln k}{k^{\beta}} \right)^{\frac{2(\delta-1)}{\delta}} + A_2(\omega) \frac{1}{k}.
\end{align}

Thus, there exists a 
constant \(A_3(\omega): = 2 \max\{A_1(\omega), A_2(\omega)\}\) such that
\begin{align}
	V_k \le A_3(\omega) \max\left\{ \frac{1}{k}, \left( \frac{\ln k}{k^{\beta}} \right)^{\frac{2(\delta-1)}{\delta}} \right\}, 
	\qquad \forall k \ge K_r(\omega). \label{eq:combined}
\end{align}

This completes the proof of the main convergence rate statement.

When \(\delta = 2\), the noise has finite variance. Then
\begin{align}
	\frac{2(\delta-1)}{\delta} =1, \qquad \varpi = 1, \qquad \theta = \beta.
\end{align}
If  \(\beta \ge 1\), then for all \(k \ge 3\),
\begin{align}
	\max\left\{ \frac{1}{k}, \frac{\ln k}{k^{\beta}} \right\} \le \frac{\ln k}{k}.
\end{align}

Thus,
\begin{align}
	V_k \le A_3 (\omega) \frac{\ln k }{k}, \forall k \ge K_r(\omega). \label{eq:combined2}
\end{align}

\subsection{Proof of Theorem \ref{thm:online}} \label{appd_V1}
	Let $V_k \coloneqq \Vert\mathbf{x}_k - x^*\Vert^2$.
For each player $i$, let
\begin{align}
	\epsilon_{i,k}^{\mathrm{MoM}} & = \hat{g}_{i,k} - \nabla_{x_i}J_i(\mathbf{x}_k), \label{eq:emom}\\
	\bar{\epsilon}_{i,k} &\coloneqq \bar{g}_{i,k} - \nabla_{x_i}J_i(\mathbf{x}_k). \label{eq:ebar}
\end{align}

Hence, according to \eqref{step4_1},
\begin{align}
	\epsilon_{i,k}^{\mathrm{Bias}} \coloneqq \tilde{g}_{i,k} - \nabla_{x_i}J_i(\mathbf{x}_k)
	= (1-\eta_k)\epsilon_{i,k}^{\mathrm{MoM}} + \eta_k\bar{\epsilon}_{i,k}. \label{eq:ecorr}
\end{align}

Lemma \ref{lem:cond_tail_bound} still holds for this algorithm. For every player $i$ and iteration $k$,
\begin{align}
	\mathbb{P}\bigl(|\epsilon_{i,k}^{\mathrm{MoM}}| > \mathcal{E}_k(\gamma_k) \mid \mathcal{F}_k\bigr) \le 2 \gamma_k \quad \text{a.s.}, \label{eq:mom_prob}
\end{align}
where
\begin{align}
\mathcal{E}_k(\gamma_k) = C_1\Bigl(\frac{16\ln(e^{1/8}\gamma_k^{-1})}{m_k}\Bigr)^{\frac{\delta-1}{\delta}},\qquad
C_1 = (12\nu^\delta)^{1/\delta}.
\end{align}
Since $2\sum_k \gamma_k < \infty$, following a similar analysis as \eqref{kesi1}, the Borel--Cantelli lemma implies that for almost every sample path $\omega$ there exists a finite integer $\kappa _a(\omega)$ such that for all $k\ge \kappa_a(\omega)$ and all $i$,
\begin{align}
	|\epsilon_{i,k}^{\mathrm{MoM}}| \le \mathcal{E}_k = C_2\Bigl(\frac{\ln(k+1)}{k^\beta}\Bigr)^{\frac{\delta-1}{\delta}}, \label{eq:mom_bound}
\end{align}
where $C_2= C_1 \cdot 48^{(\delta-1)/\delta}$.

Let $m_k':=b_ks_k $ is the number of the real adopted samples for player $i$ at iteration $k$.  Since	$s_k=\lfloor \frac{m_k}{b_k}\rfloor$, $s_k\geq \frac{m_k}{b_k}-1$, which implies that   $m_k'\ge m_k-b_k\geq  \frac{1}{2} m_k$.  Moreover, since $m_k\ge c(k+1)^\beta$, $m_k' \ge \frac{c}{2} k^\beta$. 

Applying the von Bahr--Esseen inequality  yields
\begin{align}
	\mathbb{E}\bigl[|\bar{\epsilon}_{i,k}|^\delta \mid \mathcal{F}_k\bigr]
	\le 2\nu^\delta (m_k')^{1-\delta}
	\le C_\epsilon k^{-\beta(\delta-1)} \quad \text{a.s.},  \label{eq:mean_moment}
\end{align}
with $C_\epsilon = 2\nu^\delta (\frac{c}{2})^{1-\delta}$.

Choose any $\zeta$ satisfying $0<\zeta < \frac{\beta(\delta-1)-1}{\delta}$, which is ensured   by $\beta(\delta-1)>1$.
Define the events $A'_{i,k} := \bigl\{|\bar{\epsilon}_{i,k}| > k^{-\zeta}\bigr\}$. Using the conditional Markov inequality and \eqref{eq:mean_moment},
\begin{align}
	\mathbb{P}(A_{i,k}\mid\mathcal{F}_k)
	\le \frac{\mathbb{E}[|\bar{\epsilon}_{i,k}|^\delta\mid\mathcal{F}_k]}{k^{-\zeta\delta}}
	\le C_\epsilon k^{-\beta(\delta-1)+\zeta\delta} \quad \text{a.s.}
\end{align}
The exponent satisfies $-\beta(\delta-1)+\zeta\delta < -1$ by the choice of $\zeta$. Thus, $\sum_{k=1}^\infty k^{-\beta(\delta-1)+\zeta\delta}<\infty$. Taking  the unconditional expectation  gives $ \sum_{k=0}^\infty  \mathbb{P}(A_{i,k})<\infty$. The Borel--Cantelli lemma implies that for almost every $\omega$ there exists $\kappa_b(\omega)\ge \kappa_a(\omega)$ such that for all $k\ge \kappa_b(\omega)$ and all $i$,
\begin{align}
	|\bar{\epsilon}_{i,k}| \le k^{-\zeta}. \label{eq:mean_bound}
\end{align}

From \eqref{eq:ecorr}, \eqref{eq:mom_bound}, \eqref{eq:mean_bound}, and $0\le\eta_k\le1$, we obtain for $k\ge \kappa_b(\omega)$,
\begin{align}
	|\epsilon_{i,k}^{\mathrm{Bias}}| \le \mathcal{E}_k + \eta_k k^{-\zeta}.
\end{align}

Denote the vector $\boldsymbol{\epsilon}_k^{\mathrm{Bias}} = [\epsilon_{1,k}^{\mathrm{Bias}},\dots,\epsilon_{N,k}^{\mathrm{Bias}}]^{\top}$.
Thus,  for all $k\ge \kappa_b(\omega)$,
\begin{align}
	\Vert\boldsymbol{\epsilon}_k^{\mathrm{Bias}}\Vert^2 \le 2N\bigl(\mathcal{E}_k^2 + \eta_k^2 k^{-2\zeta}\bigr). \label{eq:err_sq}
\end{align}

Exactly as in the derivation of 	\eqref{eq:V_rec} in the proof of Theorem \ref{thm:convergence}, there exist constants $C_3>0,\,G>0$ and an index $K_d$ such that for all $k\ge K_d$,
\begin{align}
	V_{k+1} \le (1-\mu\alpha_k)V_k + C_3\alpha_k\Vert\boldsymbol{\epsilon}_k^{\mathrm{Bias}}\Vert^2 + 2G^2\alpha_k^2. \label{eq:V_recur}
\end{align}
The difference with Theorem \ref{thm:convergence} is only in the notation $\boldsymbol{\epsilon}_k^{\mathrm{Bias}}$.

Define
\begin{align}
	\beta_k' \coloneqq 2 C_3\alpha_k\bigl(\bar{\eta}_k^2 + N\eta_k^2 k^{-2\zeta}\bigr) + 2G^2\alpha_k^2, \label{eq:betak}
\end{align}
where $
	\bar{\eta}_k:=\sqrt{N} \mathcal{E}_k .
$
Using \eqref{eq:err_sq} in \eqref{eq:V_recur}, we obtain that, for all $k\ge \kappa_c(\omega):=\max\{\kappa_b(\omega),K_d\}$,
\begin{align}
	V_{k+1} \le (1-\mu\alpha_k)V_k + \beta'_k. \label{eq:recur_final}
\end{align}

We now verify the three conditions of Lemma \ref{a}.
First, $\sum_{k=0}^\infty \alpha_k = \infty$. Second,  there exists $K_e$ such that $0<\mu\alpha_k\leq1$ for $k\geq K_e$. Moreover,  
\begin{align}
	\frac{\beta'_k}{\alpha_k}
	=2NC_3 \bigl(\mathcal{E}_k^2 + \eta_k^2 k^{-2\zeta}\bigr) + 2G^2\alpha_k, \label{eq:ratio}
\end{align}
which implies that
$\displaystyle\lim_{k\to\infty}\frac{\beta'_k}{\alpha_k}=0$.

Thus, by Lemma~1, $V_k\to0$ on almost every sample path, i.e.
\begin{align}
\lim_{k\to\infty}\Vert\mathbf{x}_k - x^*\Vert = 0 \quad \text{a.s.}
\end{align}

\subsection{Proof of Theorem \ref{thm:rate_online}} \label{appd_th4}

According to \eqref{eq:recur_final}, for almost every sample path $\omega$ there exists an index $\kappa_c(\omega)$ such that for all $k\ge \kappa_c(\omega)$,
\begin{align}
	V_{k+1} \le (1-\mu\alpha_k) V_k + \beta_k', \label{eq:rate_rec}
\end{align}
where
\begin{align}
	\beta_k' = 2 C_3 \alpha_k\bigl( \bar{\eta}_k^2 + N\eta_k^2 k^{-2\zeta} \bigr) + 2G^2 \alpha_k^2. \label{eq:rate_beta1}
\end{align}

According to a similar analysis as  \eqref{eq:w1a}, there exists  $\kappa_l(\omega):=\max\{\kappa_c(\omega),3\}$ such that $\forall k \geq \kappa_l(\omega)$,
\begin{align}
	C_3 \alpha_k \bar{\eta}_k^2 \le C_6 \frac{(\ln k)^\varpi}{k^{\theta+1}}, \label{eq:w1_0}
\end{align}
where 
\begin{align}
	C_6  = b C_3 N C_2^2 \cdot 2^{\varpi}. \label{eq:C6_2}
\end{align}
and 
\begin{align}
	\theta = \beta \cdot \frac{2(\delta-1)}{\delta}, \qquad 
	\varpi  = \frac{2(\delta-1)}{\delta}. \label{eq:gammanu_2}
\end{align}

For the second term,
\begin{align}
	2N C_3 \alpha_k \eta_k^2 k^{-2\zeta}
	\le& 2N C_3 \frac{b}{k} \cdot \eta_0^2 (k+1)^{-2\rho} k^{-2\zeta} \notag\\
	\le &C_{\eta} \frac{1}{k^{1+2\rho+2\zeta}}, \label{eq:bound_eta_1}
\end{align}
with $C_{\eta}: = 2N C_3 b \eta_0^2$.

For the third term, 
\begin{align}
	2G^2 \alpha_k^2 \le 2G^2 \frac{b^2}{k^2}. \label{eq:bound_step_1}
\end{align}

Similar to \eqref{eq:rec2}--\eqref{eq:rec3}, we can obtain that
\begin{align}
	V_{k+1} \le &\Bigl( 1 - \frac{\mu b}{k} \Bigr) V_k + 2 C_3 \alpha_k\bigl( \bar{\eta}_k^2 \notag\\
	& + N\eta_k^2 k^{-2\zeta} \bigr) + \frac{C_5}{k^2},  k \ge \kappa_c(\omega), \label{eq:rec3_2}
\end{align}
where $ 	C_5 = \mu b D + 2 G^2 b^2. $

Substituting \eqref{eq:w1_0}, \eqref{eq:bound_eta_1}, \eqref{eq:bound_step_1} into \eqref{eq:rec3_2} gives, for all $k\ge \kappa_l(\omega)$,
\begin{align}
	V_{k+1} \le \Bigl(1-\frac{\mu b}{k}\Bigr) V_k + w_k^{(1)} + w_k^{(2)} + w_k^{(3)}, \label{eq:rec_split}
\end{align}
where
\begin{align}
	w_k^{(1)} &= 2C_{6} \frac{(\ln k)^{\varpi}}{k^{\theta+1}}, \label{eq:w1} \\
	w_k^{(2)} &= C_{\eta} \frac{1}{k^{1+2\rho+2\zeta}}, \label{eq:w2} \\
	w_k^{(3)} &= \frac{C_5}{k^2}. \label{eq:w3}
\end{align}

Let $\kappa_m(\omega) \coloneqq \max\{\kappa_l(\omega), \mu b\}$, so that $1-\frac{\mu b}{k} \ge 0$ for all $k\ge \kappa_m(\omega)$.
Define three auxiliary nonnegative sequences $\{Z_k^{(1)}\}_{k\ge \kappa_m(\omega)}$, $\{Z_k^{(2)}\}_{k\ge \kappa_m(\omega)}$, and $\{Z_k^{(3)}\}_{k\ge \kappa_m(\omega)}$ by
\begin{align}
	Z_{\kappa_m(\omega)}^{(1)} &= V_{\kappa_m(\omega)}, & Z_{k+1}^{(1)} &= \Bigl(1-\frac{\mu b}{k}\Bigr) Z_k^{(1)} + w_k^{(1)}, \label{eq:Z1_def} \\
	Z_{\kappa_m(\omega)}^{(2)} &= 0, & Z_{k+1}^{(2)} &= \Bigl(1-\frac{\mu b}{k}\Bigr) Z_k^{(2)} + w_k^{(2)}, \label{eq:Z2_def} \\
	Z_{\kappa_m(\omega)}^{(3)} &= 0, & Z_{k+1}^{(3)} &= \Bigl(1-\frac{\mu b}{k}\Bigr) Z_k^{(3)} + w_k^{(3)}. \label{eq:Z3_def}
\end{align}

Similarly, by induction on $k$, one can verify that $V_k \le Z_k^{(1)} + Z_k^{(2)} + Z_k^{(3)}$ for all $k\ge \kappa_m(\omega)$.

For $Z_k^{(1)}$, the recursion \eqref{eq:Z1_def} matches Lemma~\ref{lem:chung-log} with parameters $r = \mu b$, $p = \theta$, $d = 2C_6$, $\tau = \varpi$.
By hypothesis, $\mu b > \theta$. Hence,  there exists an index $\kappa_p(\omega) \ge \kappa_m(\omega)$ and a constant $B_1(\omega)>0$ such that
\begin{align}
	Z_k^{(1)} \le B_1(\omega) \frac{(\ln k)^{\varpi}}{k^{\theta}} = B_1(\omega) \Bigl( \frac{\ln k}{k^{\beta}} \Bigr)^{\frac{2(\delta-1)}{\delta}}, \quad \forall k\ge \kappa_p(\omega). \label{eq:Z1_rate}
\end{align}
For $Z_k^{(2)}$, we have $r = \mu b$, $p = 2\rho+2\zeta$, $d = C_{\eta}$, $\tau = 0$. Since $\mu b > 2\rho+2\zeta$ by the choice of $b$, Lemma~\ref{lem:chung-log} provides $\kappa_q(\omega)\ge \kappa_m(\omega)$ and $B_2(\omega)>0$ with
\begin{align}
	Z_k^{(2)} \le B_2(\omega) \frac{1}{k^{2\rho+2\zeta}}, \quad \forall k\ge \kappa_q(\omega). \label{eq:Z2_rate}
\end{align}
For $Z^{(3)}$, the parameters are $r = \mu b$, $p = 1$, $d = C_5$, $\tau = 0$. Because $\mu b > 1$ and by  Lemma~\ref{lem:chung-log}, there exists  $\kappa_s(\omega)\ge \kappa_m(\omega)$ and $B_3(\omega)>0$ such that
\begin{align}
	Z_k^{(3)} \le B_3(\omega) \frac{1}{k}, \quad \forall k\ge \kappa_s(\omega). \label{eq:Z3_rate}
\end{align}

Let $\kappa_r(\omega) \coloneqq \max\{\kappa_p(\omega), \kappa_q(\omega), \kappa_s(\omega)\}$ and $B(\omega) \coloneqq 3\max\{B_1(\omega), B_2(\omega), B_3(\omega)\}$.
Then for all $k\ge \kappa_r(\omega)$,
\begin{align}
	V_k &\le B_1(\omega) \Bigl( \frac{\ln k}{k^{\beta}} \Bigr)^{\frac{2(\delta-1)}{\delta}}
	+ B_2(\omega) \frac{1}{k^{2\rho+2\zeta}}
	+ B_3(\omega) \frac{1}{k} \nonumber \\
	&\le B(\omega) \max\Bigl\{ \frac{1}{k}, \Bigl(\frac{\ln k}{k^{\beta}}\Bigr)^{\frac{2(\delta-1)}{\delta}}, \frac{1}{k^{2\rho+2\zeta}} \Bigr\}. \label{eq:final_rate}
\end{align}

When $\delta=2$, $\varpi = 1$, $\theta = \beta$.
If $\beta\ge 1$ and $2\rho+2\zeta \ge 1$, then for   $k\geq 3$,
\begin{align}
	\max\Bigl\{ \frac{1}{k}, \frac{\ln k}{k^{\beta}}, \frac{1}{k^{2\rho+2\zeta}} \Bigr\} \le \frac{\ln k}{k},
\end{align}
which yields \eqref{eq:rate_online_special}. This completes the proof.

\end{document}